\documentclass[10pt]{amsart}

\usepackage{hyperref}
\usepackage{enumerate}
\usepackage{comment}

\makeatletter

\@namedef{subjclassname@2010}{

  \textup{2010} Mathematics Subject Classification}

\usepackage{amssymb, amsmath,amsthm}
\usepackage{mathrsfs}
\newtheorem{thm}{Theorem}[section]

\newtheorem{prop}[thm]{Proposition}

\newtheorem{cor}[thm]{Corollary}

\newtheorem{lem}[thm]{Lemma}
\theoremstyle{definition}
\newtheorem{rem}{Remark}
\newtheorem{def1}{Definition}

\newtheorem{prob}[thm]{Problem}

\newcommand{\ra}{\rightarrow}

\newcommand{\mc}{\mathcal}

\newcommand{\mb}{\mathbb}
\newcommand{\sg}{\sigma}

\newcommand{\la}{\lambda}

\renewcommand{\ss}{\substack}

\newcommand{\e}{\varepsilon}

\renewcommand{\bar}{\overline}

\frenchspacing

\textwidth=15.5cm

\textheight=23cm

\parindent=16pt

\oddsidemargin=0cm

\evensidemargin=0cm

\topmargin=-0.5cm

\begin{document}
\title{On a Goldbach-type problem for the Liouville function}
\author{Alexander P. Mangerel}
\address{Department of Mathematical Sciences, Durham University, Stockton Road, Durham, DH1 3LE, UK}
\email{smangerel@gmail.com}
\maketitle
\begin{abstract}
Let $\lambda$ denote the Liouville function. We show that for all sufficiently large integers $N$, the (non-trivial) convolution sum bound
$$
\left|\sum_{n < N} \lambda(n) \lambda(N-n)\right| < N-1
$$
holds. This (essentially) answers a question posed at the 2018 AIM workshop on Sarnak's conjecture.
\end{abstract}
\section{Introduction}
\subsection{Main result}
The following problem was posed at the 2018 AIM workshop on Sarnak's conjecture.
\begin{prob}[Problem 5.1 of \cite{AIMList}] \label{prob:Rad}
Prove that for every\footnote{Actually, no range for $N$ was given in the problem. It is our presumption that the bound was meant to be shown for any $N \geq 3$.} $N \geq 3$ the sum
$$
\mc{L}_{\la}(N) := \sum_{1 \leq n < N} \lambda(n)\lambda(N-n)
$$
satisfies $|\mc{L}_{\la}(N)| < N-1$. 
\end{prob}
Obviously, the triangle inequality furnishes the trivial bound $|\mc{L}_{\lambda}(N)| \leq N-1$. Thus, the problem is to show \emph{any savings} over this bound. This should be interpreted as an analogue of the binary Goldbach problem for the Liouville function. Indeed, if $N \geq 4$ is even and $\lambda$ is replaced by the prime-supported von Mangoldt function $\Lambda_1(n) := (\log n)1_{n \text{ prime}}$, then proving the existence of primes $p,q$ with $p+q = N$ is equivalent to
$$
\mc{L}_{\Lambda_1}(N) := \sum_{n < N} \Lambda_1(n)\Lambda_1(N-n) > 0,
$$
i.e., showing any improvement over the trivial lower bound $\mc{L}_{\Lambda_1}(N) \geq 0$. \\
Problem \ref{prob:Rad} is far weaker than what we expect to be true regarding the convolution sum $\mc{L}_{\la}(N)$. It is natural to compare the problem at hand with what ought to follow from Chowla's conjecture \cite{Cho}, namely that for all (fixed) $h \geq 1$,
$$
\frac{1}{x} \left|\sum_{n \leq x} \lambda(n) \lambda(n+h)\right| = o(1) \text{ as } x \ra \infty.
$$
On the heuristic basis that the additively coupled values $\lambda(n)$ and $\lambda(N-n)$ ought to also be ``almost orthogonal'' on average, Corr\'{a}di and K\'{a}tai \cite{CorKa} have conjectured that $|\mc{L}_{\la}(N)| = o(N)$ as $N \ra \infty$.  As far as we are aware, the only result in this direction is a theorem of De Koninck, Germ\'{a}n and K\'{a}tai \cite{DKGK}, who proved the conjecture of Corr\'{a}di and K\'{a}tai under the assumption that there are infinitely many Siegel zeroes. Nevertheless, because $N$ is a large shift, recent methods that have proven effective in bounding binary correlations of multiplicative functions seem unsuited to the estimation of the convolution sum $\mc{L}_{\la}(N)$, and thus the problem at hand remains non-trivial unconditionally. \\ 
Our main theorem (essentially) solves Problem \ref{prob:Rad}.
\begin{thm} \label{thm:main}
There exists $N_0 \in \mb{N}$ such that if $N \geq N_0$ then $|\mc{L}_{\la}(N)| < N-1$. 
\end{thm}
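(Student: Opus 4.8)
The plan is to establish the strict inequality by showing that equality $|\mathcal{L}_\lambda(N)| = N-1$ is impossible for large $N$. The triangle inequality gives $|\mathcal{L}_\lambda(N)| \leq \sum_{1 \leq n < N} |\lambda(n)\lambda(N-n)| = N-1$, and equality in the triangle inequality forces \emph{rigidity}: every summand $\lambda(n)\lambda(N-n)$ for $1 \leq n < N$ must share a common sign $\eta \in \{+1,-1\}$. Thus the whole problem reduces to ruling out the situation where $\lambda(n)\lambda(N-n) = \eta$ for \emph{all} $n$ in the range simultaneously. I would phrase this as a combinatorial constraint: the map $n \mapsto \lambda(n)$ restricted to $\{1, \dots, N-1\}$ must satisfy $\lambda(N-n) = \eta\,\lambda(n)$ for every $n$, i.e. $\lambda$ is (anti)symmetric about $N/2$ on this interval.

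\smallskip

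The next step is to exploit the multiplicativity of $\lambda$ against this reflection symmetry. The key observation is that the forced relation $\lambda(n)\lambda(N-n) = \eta$ links the value of $\lambda$ at $n$ to its value at $N-n$, and by running over many $n$ one can chain together such relations to derive contradictory constraints. A clean way to generate contradictions is to select a small set of test values of $n$ — for instance, $n$ and $2n$, or $n$ in a short arithmetic progression — where the multiplicative identities $\lambda(2n) = -\lambda(n)$, $\lambda(p^2 m) = \lambda(m)$, etc., are available, and to confront these with the reflection identity. For example, if both $n$ and $N-n$ and also $2n$ (assuming $2n < N$) obey the common-sign constraint, then combining $\lambda(n)\lambda(N-n)=\eta$, $\lambda(2n)\lambda(N-2n)=\eta$, and $\lambda(2n) = -\lambda(n)$ would relate $\lambda(N-n)$ and $\lambda(N-2n)$ with a forced sign flip, which one then plays against another instance of the constraint to close the loop.

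\smallskip

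Concretely, I would look for a short configuration of residues that is guaranteed to be populated by squarefree numbers or by numbers with controlled $\lambda$-values for all large $N$. One robust approach: pick a prime $p$ and consider the contribution of $n$ divisible by high powers of $p$, or alternatively isolate a handful of explicit small summands whose signs are pinned down by elementary factorization, and show they cannot all equal a single $\eta$. The cleanest version exhibits a \emph{fixed finite set} of linear relations (coming from perturbing $n$ by controlled multiplicative factors) that over-determines the sign pattern, so that for all $N$ beyond some $N_0$ the rigidity hypothesis becomes inconsistent. Because we only need to defeat the single extremal pattern rather than estimate a genuinely oscillating sum, no analytic input (no zero-density or correlation estimates for $\lambda$) should be required — the argument is essentially combinatorial-multiplicative.

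\smallskip

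The main obstacle I anticipate is handling the boundary and the $\eta$-dependence uniformly: the reflection $n \mapsto N-n$ interacts differently with the factor $2$ depending on the parity of $N$, and the two cases $\eta = +1$ and $\eta = -1$ must both be excluded. One must ensure that the chosen test configuration of indices genuinely lies inside the summation range $1 \leq n < N$ (hence the need for $N$ large, giving $N_0$) and that the multiplicative identities invoked are unconditionally valid regardless of the unknown factorizations of $N-n$. Care is also needed because $\lambda$ is only \emph{completely} multiplicative, not controlled additively, so the relations one derives are purely about signs; the delicate point is arranging enough independent sign relations that a single global $\eta$ cannot satisfy all of them.
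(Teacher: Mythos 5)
Your opening reduction is correct: equality in the triangle inequality forces $\lambda(n)\lambda(N-n)=\eta$ for all $1\leq n<N$ (with $\eta=\lambda(N-1)$), and the problem is to rule this out. But the core of your plan --- that a fixed finite set of multiplicativity/reflection relations ``over-determines the sign pattern,'' so that the extremal configuration is combinatorially inconsistent and ``no analytic input should be required'' --- cannot work, for a structural reason. Every relation you propose to exploit (complete multiplicativity, $\lambda(2n)=-\lambda(n)$, the reflection $\lambda(N-n)=\eta\lambda(n)$, and chains of these) is also satisfied by the Legendre symbol: for $N=p$ prime, $\chi_p(n)\chi_p(p-n)=\chi_p(-1)$ is constant and $\chi_p$ is completely multiplicative. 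Consequently, the sign pattern $\lambda(n)=\chi_p(n)$ on $[1,p)$ is a consistent solution of \emph{every} such system of relations, and it is a genuine counterexample scenario unless one can show that some prime $q<p$ satisfies $\left(\tfrac{q}{p}\right)=+1$ (since $\lambda(q)=-1$ at all primes, while a completely multiplicative function agreeing with $\lambda$ on $[1,p)$ forces all primes below $p$ to be non-residues). In fact, for prime $N=p$ the equality $|\mathcal{L}_\lambda(p)|=p-1$ holds \emph{if and only if} every prime $q<p$ is a quadratic non-residue mod $p$; defeating the extremal pattern is thus equivalent to the least-prime-quadratic-residue problem of Linnik and Vinogradov, whose only known treatment passes through lower bounds for $L(1,\chi_p)$, i.e.\ Siegel's theorem. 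This is exactly what the paper does: it first proves (via a Fourier-analytic dilation property $S_\lambda(d\xi)=\lambda(d)S_\lambda(\xi)$, established by a delicate iterative argument) that equality forces $\lambda(n)=\chi_p(n)$ for all $n<p$; it then deduces $L(1,\chi_p)\ll (\log p)\,p^{-1/6}$ from the fact that $1\ast\lambda$ is supported on squares, contradicting Siegel's theorem for large $p$ (with Linnik's theorem handling prime-power multiplicities, and a separate lemma reducing general $N$ to prime powers --- a reduction your sketch also omits). The ineffectivity of the resulting $N_0$ is inherited from Siegel; the effective class-number bound $L(1,\chi_p)\gg p^{-1/2}$ is too weak to close the argument, so a purely combinatorial proof of the kind you envisage would amount to an effective breakthrough on exceptional characters.

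A concrete sanity check that your relations alone can never be inconsistent: equality actually holds for small $N$, e.g.
$$
\mathcal{L}_\lambda(3)=-2, \qquad \mathcal{L}_\lambda(5)=\lambda(1)\lambda(4)+\lambda(2)\lambda(3)+\lambda(3)\lambda(2)+\lambda(4)\lambda(1)=4,
$$
where in the second case every summand equals $+1$, matching $\lambda=\chi_5$ on $[1,5)$. Your proposed test configurations are uniform in $N$ (they only require the indices to lie in range), so any contradiction they yielded for large $N$ would already be visible in these consistent examples. The missing idea, in short, is that the rigidity scenario is not self-contradictory; it is the exceptional-character scenario in disguise, and eliminating it requires the analytic machinery you explicitly hoped to avoid.
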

\begin{rem}
The proof of Theorem \ref{thm:main} relies on Siegel's theorem on lower bounds for $L(1,\chi)$, where $\chi$ is a quadratic Dirichlet character. Thus, the lower bound $N_0$ is ineffective. See Remark \ref{rem:STBd} below for an indication of what sorts of effective results (up to a possible unique exception) may be proved, in the case of $N$ \emph{prime}, using the Siegel-Tatuzawa theorem \cite{Tat}.
\end{rem}
\subsection{Proof strategy}
We briefly explain our strategy as follows. As we show below (see Lemma \ref{lem:restricttoPrime}), in order to prove Theorem \ref{thm:main} we may restrict to the case in which $N=p^k$ is a prime power, and it is instructive to first consider the case $k = 1$. \\
In this case, it is readily observed that if $|\mc{L}_{\lambda}(p)| = p-1$ in contradiction to the claim, then $\lambda(m)\lambda(p-m)$ is constant over all $1 \leq m < p$, in fact
$$
\lambda(m) \lambda(p-m) = \lambda(p-1)\lambda(1) = \lambda(p-1).
$$
But note that if $\chi_p = \left(\frac{\cdot}{p}\right)$ denotes the Legendre symbol modulo $p$ then the same is true of $\chi_p$:
$$
\chi_p(m) \chi_p(p-m) = \chi_p(-1)\chi_p(m)^2 = \chi_p(p-1).
$$ 
Inspired by this comparison, we seek to show that $\lambda(m) = \chi_p(m)$ in the fundamental domain $[1,p-1]$ for $\chi_p$. Using harmonic analysis over $\mb{Z}/p\mb{Z}$, the problem reduces to understanding the Fourier coefficients of $n \mapsto \lambda(n) 1_{[1,p-1]}(n)$, i.e., exponential sums
$$
S_{\lambda}(\xi) := \sum_{1 \leq n < p} \lambda(n)e(n\xi/p), \quad \xi \pmod{p},
$$
where as usual $e(t) = e^{2\pi i t}$ for $t \in \mb{R}$. The corresponding sums with $\lambda$ replaced by $\chi_p$ are the twisted Gauss sums
$$
\tau(\chi_p,\xi) := \sum_{1 \leq n \leq p} \chi_p(n) e\left(\frac{n\xi}{p}\right) = \chi_p(\xi) \tau(\chi_p,0), \quad \xi \pmod{p}.
$$
In particular, we have the \emph{dilation property} that for each $1 \leq d < p$,
$$
\tau(\chi_p,d\xi) = \chi_p(d)\tau(\chi_p,\xi) \text{ for all } \xi \pmod{p}.
$$
We prove below (see Proposition \ref{prop:dilation}) that whenever $|\mc{L}_\la(p)| = p-1$ a similar dilation property holds for $S_{\lambda}$, i.e., for each $1 \leq d < p$,
\begin{equation}\label{eq:dilByD}
S_{\la}(d\xi) = \lambda(d)S_{\lambda}(\xi) \text{ for all } \xi \pmod{p}.
\end{equation}
The upshot of this is that when $d$ is a primitive root modulo $p$ we may determine all of the sums $S_{\lambda}(\xi)$, $\xi \neq 0$, which turn out to coincide precisely with the twisted Gauss sums given above. In this way, verifying \eqref{eq:dilByD} allows us to determine that $\lambda(n) = \chi_p(n)$ for all $1 \leq n < p$. \\
It turns out that, under the assumption $|\mc{L}_{\la}(p)| = p-1$, proving the dilation property \eqref{eq:dilByD} is equivalent to proving that
$$
\lambda(m)\lambda(m+jp) = +1 \text{ whenever } 1 \leq m < p, \, 0 \leq j < d \text{ and } m \equiv -jp \pmod{d}.
$$
We prove that this property holds for all pairs of $(m,j)$ in question using an iterative argument (see Section \ref{sec:iter}). The rough idea of that argument is to 
$$
\text{replace } \lambda(m)\lambda(m+jp) \text{ by a prescribed sign multiplied by } \lambda(m')\lambda(m'+j'p),
$$
in which $m'\equiv - j'p \pmod{d}$ and $0 \leq j' < j$; crucially, the parameter $j$ has been decremented. Iterating this procedure must eventually result in $j' = 0$, in which case the product on the right-hand side is simply $+1$. We are then able to calculate the original product $\lambda(m)\lambda(m+jp)$ to be $+1$ as well. Our argument may be seen as extending the ``periodic'' behaviour imposed on $\lambda$ by the relation $\lambda(m)\lambda(p-m) = \lambda(p-1)$ for all $m \in \{1,\ldots,p-1\}$, to the larger domain $[1,dp-1]$. \\
Having showed that $\lambda(n) = \chi_p(n)$ for all $n < p$, we return to the general prime power case $N = p^k$, and deduce  
upper bounds both for $p$ and for $k$. First, the bound on $p$ arises from the observation that if $\lambda(n) = \chi_p(n)$ for sufficiently many $n$ then $\chi_p$ is an exceptional character. We obtain a bound of the shape $L(1,\chi_p) \ll_{\e} p^{-1/6+\e}$ for the Dirichlet $L$-function of $\chi_p$ at $s = 1$ (see Proposition \ref{prop:LlamBd}), which is in contradiction to Siegel's theorem \cite[Thm. 5.28(2)]{IK} when $p$ is sufficiently large. This is the source of ineffectivity in our bounds on $N_0$. \\
Next, we note that if $p^k||N$ then the constraint $\lambda(n) = \chi_p(n)$ can be extended to all $n < p^k$ by an inductive argument (see Lemma \ref{lem:extendtopk} below). In particular, this implies that $\chi_p(q) = -1$ at all primes $q < p^k$. On the other hand, using Linnik's theorem \cite[Thm. 18.1]{IK} we can find primes $q < p^L$, with $L$ an effectively computable constant, for which $\chi_p(q) = +1$ necessarily (namely when $q \pmod{p}$ is a quadratic residue). This implies that $k \leq L$.\\
It follows that $N$ must be composed of prime powers $p^k$ with both $k \leq L$ and $p \leq p_0$, so that $N$ itself must be bounded by some $N_0$, as required.
\begin{rem}
One may also ask\footnote{We thank Mark Shusterman for pointing out this problem to us, which he asked in the the MathOverflow post \url{https://mathoverflow.net/questions/307479/goldbachs-conjecture-for-the-liouville-function}.} another natural Goldbach-type problem regarding the Liouville function: given an even integer $N \geq 4$, must there exist $1 \leq a,b \leq N$ with $a + b = N$, such that $\lambda(a) = \lambda(b) = -1$? This is obviously implied by the binary Goldbach conjecture, and therefore a weakening of it. \\
The methods of this paper appear to be far too rigid to address this problem directly. Note however, that even a result of the form
\begin{equation}\label{eq:perturbedAIM}
\left|\sum_{n < N} \lambda(n)\lambda(N-n)\right| < N-g(N),
\end{equation}
where $g: \mb{R} \ra \mb{R}$ is a (sufficiently quickly) increasing function satisfying $g(x) = o(x)$, would suffice to prove the existence of such a pair $(a,b)$. \\
Indeed, suppose otherwise. Then for any $1 \leq n < N$, $(1-\lambda(n))(1-\lambda(N-n)) = 0$. It follows that
$$
0 = \sum_{1 \leq n < N} (1-\lambda(n))(1-\lambda(N-n)) = N-1 -2\sum_{n < N} \lambda(n) + \sum_{n < N} \lambda(n)\lambda(N-n).
$$
We deduce from this and the prime number theorem that, e.g.,
$$
\left|\sum_{n < N} \lambda(n)\lambda(N-n)\right| > N- 2\left|\sum_{n < N} \lambda(n)\right| \geq N-CNe^{-\sqrt{\log N}},
$$
for some absolute constant $C > 0$ and all $N \geq 3$. Thus, the choice $g(x) = Cx e^{-\sqrt{\log x}}$ would suffice to this end. \\
It is natural to ask to what extent the techniques in this paper may be perturbed in order to prove a bound like \eqref{eq:perturbedAIM}. We plan to return to this problem in a future paper.
\end{rem}

\section*{Acknowledgments} 
We would like to thank Oleksiy Klurman, Brad Rodgers and Mark Shusterman for useful comments and encouragement.
\section{Reduction to the prime power case}
Assume for the sake of contradiction that $N \geq 2$ satisfies $|\mc{L}_\la(N)| \geq N-1$. By the triangle inequality, we must have $|\mc{L}_{\la}(N)| = N-1$. Our first lemma shows that we may restrict our attention to prime power values of $N$.
\begin{lem} \label{lem:restricttoPrime}
Let $N \in \mb{N}$ and assume that there is a divisor $d|N$ such that $|\mc{L}_{\la}(d)| < d-1$. Then $|\mc{L}_{\la}(N)| < N-1$. In particular, if $|\mc{L}_{\la}(N)| = N-1$ then $|\mc{L}_{\la}(d)| = d-1$ for all $d | N$. \\
Moreover, if $|\mc{L}_{\la}(N)| = N-1$ then $\mc{L}_{\la}(d) = \lambda(d-1)(d-1)$ and $\lambda(N-1) = \lambda(d-1)$ for all $d|N$.
\end{lem}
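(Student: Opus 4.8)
The plan is to exhibit $\mc{L}_\la(d)$ as a sub-sum of $\mc{L}_\la(N)$ indexed by the multiples of $m := N/d$, and then exploit the rigidity of equality in the triangle inequality. Concretely, I would first note that as $j$ ranges over $1 \leq j < d$ the index $n = jm$ runs over exactly the $d-1$ multiples of $m$ in $[1,N)$, and that by complete multiplicativity of $\lambda$ (using $\lambda(m)^2 = 1$),
$$
\lambda(jm)\lambda(N - jm) = \lambda(jm)\lambda((d-j)m) = \lambda(m)^2 \lambda(j)\lambda(d-j) = \lambda(j)\lambda(d-j).
$$
Summing over $1 \leq j < d$ then gives $\sum_{\substack{1 \leq n < N \\ m \mid n}} \lambda(n)\lambda(N-n) = \mc{L}_\la(d)$, so that $\mc{L}_\la(d)$ appears verbatim as the sub-sum of $\mc{L}_\la(N)$ over those $d-1$ terms with $m \mid n$.

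Next I would analyze the equality case directly. Since each term $\lambda(n)\lambda(N-n)$ is $\pm 1$, the bound $|\mc{L}_\la(N)| = N-1$ can hold only if all $N-1$ terms share a common value $\eps \in \{\pm 1\}$. In particular every term of the distinguished sub-sum equals $\eps$, so $\mc{L}_\la(d) = (d-1)\eps$ and hence $|\mc{L}_\la(d)| = d-1$, for every $d \mid N$. The first assertion of the lemma is precisely the contrapositive of this (using the trivial bound $|\mc{L}_\la(d)| \leq d-1$, so that $|\mc{L}_\la(d)| \neq d-1$ forces $|\mc{L}_\la(d)| < d-1$), and the ``in particular'' clause merely restates it.

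For the ``moreover'' clause I would read off the common sign $\eps$ from two distinguished terms. Taking $n = 1$ gives $\lambda(N-1) = \lambda(1)\lambda(N-1) = \eps$, while for each $d \mid N$ the term $j = 1$, i.e. $n = m$, gives $\lambda(d-1) = \lambda(m)\lambda((d-1)m) = \eps$ by the same multiplicativity computation as above. Hence $\lambda(N-1) = \lambda(d-1) = \eps$, and substituting $\eps = \lambda(d-1)$ into $\mc{L}_\la(d) = (d-1)\eps$ yields $\mc{L}_\la(d) = \lambda(d-1)(d-1)$, as required.

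I would expect no serious obstacle here; the only point requiring care is the bookkeeping that $\{jm : 1 \leq j < d\}$ is exactly the set of multiples of $m$ strictly below $N$ and has precisely $d-1$ elements. This ensures that the distinguished sub-sum has the same length as $\mc{L}_\la(d)$, so that the ``all terms equal'' analysis of the equality case transfers cleanly from $\mc{L}_\la(N)$ down to $\mc{L}_\la(d)$.
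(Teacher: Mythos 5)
Your proof is correct, and it rests on the same decomposition as the paper's: single out the sub-sum of $\mc{L}_{\la}(N)$ over multiples of $m = N/d$ and identify it with $\mc{L}_{\la}(d)$ via complete multiplicativity of $\lambda$. The difference is the direction in which you run the logic. The paper proves the first claim directly: splitting $\mc{L}_{\la}(N)$ into the terms with $m \mid n$ and $m \nmid n$ and applying the triangle inequality to each piece gives the quantitative transfer inequality
$$
|\mc{L}_{\la}(N)| \leq |\mc{L}_{\la}(d)| + (N-1) - (d-1),
$$
from which any strict savings for $d$ immediately yields strict savings for $N$. You instead prove the equality statement first — $|\mc{L}_{\la}(N)| = N-1$ forces all $N-1$ summands to share one sign $\eps$, so the distinguished sub-sum, which is $\mc{L}_{\la}(d)$ on the nose, equals $(d-1)\eps$ — and then recover the first claim by contraposition, which is valid exactly as you say because the trivial bounds convert ``not equal'' into ``strictly less'' on both sides. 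The two arguments are of comparable length and both are sound; what the paper's version buys is the inequality itself, which transfers \emph{quantitative} savings from $d$ up to $N$ (the shape of statement one would want for perturbed bounds such as \eqref{eq:perturbedAIM} in the paper's remark), whereas your rigidity argument is all-or-nothing and only sees the exact equality case. Your handling of the ``moreover'' clause — reading the common sign off the terms $n=1$ and $n=m$ — matches the paper's specialization $n = km$ in substance and is fine.
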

\begin{proof}
Write $N = md$. Splitting the sum over $n$ defining $\mc{L}_{\la}(N)$ according to whether $m|n$ or not, the triangle inequality implies
\begin{align*}
|\mc{L}_{\la}(N)| &\leq |\sum_{\ss{n < N \\ m|n}} \lambda(n)\lambda(N-n)| + |\sum_{\ss{n < N \\ m \nmid n}} \lambda(n)\lambda(N-n)| \\
&\leq |\sum_{\ss{k < d}} \lambda(mk)\lambda(m(d-k))| + N-1 - |\{1 \leq n < N : m|n\}| \\
&= |\sum_{k < d} \lambda(k)\lambda(d-k)| + N-1-(d-1) = |\mc{L}_{\la}(d)| + N-d.
\end{align*}
Thus, if $|\mc{L}_{\la}(d)| < d-1$ then $|\mc{L}_{\la}(N)| < d-1 + N-d = N-1$, as required. \\
For the second claim, since each summand of $\mc{L}_{\la}(d)$ is $\pm 1$ and there are $d-1$ terms in its support, we have $|\mc{L}_{\la}(d)| = d-1$ if and only if
$$
\lambda(n)\lambda(d-n) = \lambda(1)\lambda(d-1) = \lambda(d-1) \text{ for all } 1 \leq n < d.
$$
It thus follows that $\mc{L}_{\la}(d) = \lambda(d-1)(d-1)$, as claimed. Finally, since
$$
\lambda(n)\lambda(N-n) = \lambda(N-1) \text{ for all } n < N,
$$
specialising to $n = km$ for $k \in \mb{N}$, we find
$$
\lambda(N-1) = \lambda(km)\lambda(m(d-k)) = \lambda(k)\lambda(d-k) = \lambda(d-1),
$$
as required.
\end{proof}
In view of Lemma \ref{lem:restricttoPrime}, we may analyse the condition $|\mc{L}_{\la}(N)| = N-1$ by considering the implied constraints $|\mc{L}_{\la}(p^j)| = p^j-1$, whenever $p^j|N$. In the next two subsections we will obtain constraints both on the size of $p|N$, as well as the multiplicity $k$ such that $p^k||N$.
\subsection{Bounds on the size of prime divisors of $N$}
In the sequel, write
$$
S_{\lambda}(\xi) := \sum_{1 \leq n < p} \lambda(n) e(n\xi/p), \quad \xi \in \mb{Z}/p\mb{Z}.
$$
For the purposes of illustration let us observe that the condition $|\mc{L}_{\la}(p)| = p-1$ imposes rigid constraints on the exponential sums $S_{\la}(\xi)$. Indeed, we have that
$$
\frac{1}{p}\sum_{\xi \pmod{p}} S_{\lambda}(\xi)^2 = \sum_{1 \leq n,m < p} \lambda(n)\lambda(m) \frac{1}{p}\sum_{\xi \pmod{p}} e\left(\frac{(n+m)\xi}{p}\right) = \sum_{1 \leq n < p} \lambda(n)\lambda(p-n) = \mc{L}_{\la}(p),
$$
since $n+m \equiv 0 \pmod{p}$ with $1 \leq n,m < p$ if and only if $n+m = p$. 
As noted in the proof of Lemma \ref{lem:restricttoPrime}, if $|\mc{L}_{\la}(p)| = p-1$ then 
\begin{equation}\label{eq:convProp}
\lambda(m)\lambda(p-m) = \lambda(p-1)\lambda(1) = \lambda(p-1) \text{ for all } 1 \leq m < p.
\end{equation}
and also $\mc{L}_{\lambda}(p) = \lambda(p-1)(p-1)$. Therefore,
$$
\frac{\lambda(p-1)}{p}\sum_{\xi \pmod{p}} S_{\lambda}(\xi)^2 = p-1 = \frac{1}{p}\sum_{\xi \pmod{p}} |S_{\lambda}(\xi)|^2.
$$
To motivate our forthcoming arguments, we explicitly observe the following relations, which will imply further rigidity in the values of $S_{\la}(\xi)$.
\begin{lem} \label{lem:relations}
Let $p > 2$ be prime and let $1 \leq m < p$. Suppose $|\mc{L}_{\lambda}(p)| = p-1$. Then: 
\begin{enumerate}[(a)]
\item if $m$ is odd then $\lambda(p+m)  = \lambda(m)$,
\item if $m \equiv p \pmod{3}$ then $\lambda(2p+m) = \lambda(m)$, 
\item if $m \equiv 2p \pmod{3}$ then $\lambda(2p-m) = \lambda(p-1)\lambda(m)$. 
\end{enumerate}
\end{lem}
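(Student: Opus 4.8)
The plan is to extract from the hypothesis the single structural input that, since every value of $\lambda$ is $\pm 1$, the relation \eqref{eq:convProp} is equivalent to the reflection identity
$$\lambda(p-n) = \lambda(p-1)\lambda(n) \qquad \text{for all } 1 \le n < p,$$
which I will use together with complete multiplicativity and the values $\lambda(2) = \lambda(3) = -1$. The common mechanism for all three parts is the following: each of $p+m$, $2p+m$, $2p-m$ lies in $(p,2p)$ or $(2p,3p)$ and, under the stated congruence, is divisible by a small prime $\ell \in \{2,3\}$. I factor out $\ell$ to pull the argument back into the fundamental domain $[1,p)$, picking up a sign $\lambda(\ell) = -1$, and then apply the reflection identity. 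The only routine checks are the range bounds guaranteeing that the quotients $\tfrac{p\pm m}{2}$, $\tfrac{2p+m}{3}$, $\tfrac{p\pm m}{3}$ all lie in $[1,p)$, which follow from $1 \le m < p$ and the relevant divisibility.

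Parts (a) and (b) then fall out in one stroke. For (a), oddness of $m$ forces both $p+m$ and $p-m$ to be even, so $\lambda(p+m) = -\lambda(\tfrac{p+m}{2})$; applying the reflection identity to $\tfrac{p+m}{2}$ sends its partner to $\tfrac{p-m}{2}$, giving $\lambda(p+m) = -\lambda(p-1)\lambda(\tfrac{p-m}{2})$, while factoring $2$ out of $\lambda(m) = \lambda(p-1)\lambda(p-m)$ shows $\lambda(m) = -\lambda(p-1)\lambda(\tfrac{p-m}{2})$ as well. Part (b) is identical with $\ell = 3$: the congruence $m \equiv p \pmod 3$ makes both $2p+m$ and $p-m$ divisible by $3$, the reflection sends $\tfrac{2p+m}{3}$ to $\tfrac{p-m}{3}$, and comparison with $\lambda(m) = \lambda(p-1)\lambda(p-m) = -\lambda(p-1)\lambda(\tfrac{p-m}{3})$ closes the case.

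The main obstacle is part (c). Here the congruence $m \equiv 2p \pmod 3$ makes $2p-m$ and $p+m$ (rather than $p-m$) divisible by $3$, so factoring $3$ out of $2p-m$ and reflecting produces the partner $\tfrac{p+m}{3}$, with a \emph{plus} sign on $m$, instead of the $\tfrac{p-m}{3}$ that appeared in (b). The single-step argument therefore stalls at $\lambda(2p-m) = -\lambda(p-1)\lambda(\tfrac{p+m}{3})$, and one cannot directly identify $\lambda(\tfrac{p+m}{3})$ with $\pm\lambda(m)$, since $p+m$ lies outside the fundamental domain and is not governed by the reflection identity. To get around this I would split on the parity of $m$. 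When $m$ is even I sidestep the factor of $3$ altogether: writing $2p - m = 2\bigl(p - \tfrac m2\bigr)$ and reflecting $p - \tfrac m2 \in [1,p)$ gives $\lambda(2p-m) = -\lambda(p-1)\lambda(\tfrac m2) = \lambda(p-1)\lambda(m)$, using $\lambda(\tfrac m2) = -\lambda(m)$ (this case does not even use the congruence). When $m$ is odd, $p+m$ is even, so I invoke the already-proven part (a) to get $\lambda(p+m) = \lambda(m)$; combining this with $\lambda(p+m) = -\lambda(\tfrac{p+m}{3})$ (valid since $3 \mid p+m$) yields $\lambda(\tfrac{p+m}{3}) = -\lambda(m)$, which is exactly the sign needed to conclude $\lambda(2p-m) = \lambda(p-1)\lambda(m)$. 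Thus the essential new content of the lemma is this sign mismatch in (c), resolved by the parity dichotomy and an appeal to (a).
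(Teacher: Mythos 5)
Your proof is correct and follows essentially the same route as the paper: both rest on the relation \eqref{eq:convProp} (your ``reflection identity'') combined with complete multiplicativity at the primes $2$ and $3$, and both resolve the sign obstruction in part (c) by splitting on the parity of $m$ and appealing to part (a). The only cosmetic difference is that in the even subcase of (c) you factor $2$ out of $2p-m$ directly, where the paper applies (a) to the odd number $p-m$; the two computations are interchangeable.
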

\begin{proof}
(a) If $1 \leq m < p$ is odd then $(p\pm m)/2 \in \mb{Z} \cap [1,p-1]$, and we have
$$
p = \frac{p+m}{2} + \frac{p-m}{2}.
$$
As $|\mc{L}_{\lambda}(p)| = p-1$, using \eqref{eq:convProp} with $m$ replaced by $(p-m)/2$ we get
$$
\lambda(p-1) = \lambda\left(\frac{p+m}{2}\right)\lambda\left(\frac{p-m}{2}\right) = \lambda(p+m)\lambda(p-m).
$$
Since also $\lambda(p-1) = \lambda(m)\lambda(p-m)$, the first claim follows. \\
(b) The argument here is similar: if $m \equiv p \pmod{3}$ then $(2p+m)/3, (p-m)/3 \in \mb{Z} \cap [1,p-1]$ and also 
$$
p = \frac{2p+m}{3} + \frac{p-m}{3},
$$
whence we obtain by \eqref{eq:convProp} that
$$
\lambda(p-1) = \lambda\left(\frac{2p+m}{3}\right)\lambda\left(\frac{p-m}{3}\right) = \lambda(2p+m)\lambda(p-m) = \lambda(p-1)\lambda(2p+m)\lambda(m),
$$
from which the claim follows. \\
(c) Suppose $m \equiv 2p \pmod{3}$. Again, the above idea yields
$$
\lambda(p-1) = \lambda\left(\frac{2p-m}{3}\right)\lambda\left(\frac{p+m}{3}\right) = \lambda(2p-m)\lambda(p+m).
$$
If $m$ is odd then $\lambda(p+m) = \lambda(m)$ by (a), and the claim follows immediately. Otherwise, if $m$ is even then note that since $p > 2$, $p-m$ is odd and hence (a) again yields
$$
\lambda(2p-m) = \lambda(p+(p-m)) = \lambda(p-m) = \lambda(p-1)\lambda(m).
$$
Thus, claim (c) follows in this case as well. 
\end{proof}
The three relations given in Lemma \ref{lem:relations} allows us to deduce the following.
\begin{lem} \label{lem:23dilate}
Let $p > 3$ and assume that $|\mc{L}_{\lambda}(p)| = p-1$. Then 
\begin{equation} \label{eq:dilates}
S_{\lambda}(3\xi) = S_{\lambda}(2\xi) = - S_{\lambda}(\xi) \text{ for all } \xi \pmod{p}.
\end{equation}
In particular, for any $j,k \geq 0$ we have
$$
S_{\lambda}(3^j) = (-1)^j S_{\lambda}(1), \quad S_{\lambda}(2^k) = (-1)^k S_{\lambda}(1).
$$
\end{lem}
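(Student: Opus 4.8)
The plan is to prove the dilation relations \eqref{eq:dilates} by a change of variables in the exponential sums, exploiting the fact that $e(cn\xi/p)$ depends on $n$ only through the residue of $cn$ modulo $p$. For $c \in \{2,3\}$ the map sending $n$ to the unique $m$ with $1 \le m < p$ and $m \equiv cn \pmod{p}$ is a bijection of $\{1,\dots,p-1\}$ onto itself (here we use $p > 3$ so that $c$ is invertible). Writing $S_{\la}(c\xi) = \sum_{1 \le n < p} \lambda(n) e(m\xi/p)$, the whole matter reduces to showing that $\lambda(n) = -\lambda(m)$ for every such pair $(n,m)$; summing then gives $S_{\la}(c\xi) = -\sum_{1 \le m < p}\lambda(m) e(m\xi/p) = -S_{\la}(\xi)$. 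Since $\lambda$ is completely multiplicative with $\lambda(2) = \lambda(3) = -1$, we have $\lambda(n) = -\lambda(cn)$, so it suffices to prove $\lambda(cn) = \lambda(m)$, i.e.\ that reducing $cn$ modulo $p$ leaves the value of $\lambda$ unchanged.

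For $c = 2$ I would split according to the size of $2n$. If $2n < p$ then $m = 2n$ and there is nothing to prove. If $2n > p$ (equality is impossible as $p$ is odd) then $2n = p + m$ with $m$ odd, and Lemma \ref{lem:relations}(a) gives $\lambda(p+m) = \lambda(m)$, as needed. This settles $S_{\la}(2\xi) = -S_{\la}(\xi)$.

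For $c = 3$ I would split into the three ranges $3n < p$, $p < 3n < 2p$, and $2p < 3n < 3p$. The first is immediate. In the third range $3n = 2p + m$ with $m \equiv p \pmod{3}$, so Lemma \ref{lem:relations}(b) yields $\lambda(2p+m) = \lambda(m)$ directly. The middle range is the genuine obstacle: there $3n = p + m$ with $m \equiv 2p \pmod{3}$, and none of the three relations expresses $\lambda(p+m)$ in terms of $\lambda(m)$ in a single step. The trick I would use is the identity $2p - m = 3(p-n)$, which gives $\lambda(2p - m) = -\lambda(p-n)$ by multiplicativity; combining this with Lemma \ref{lem:relations}(c) (applicable since $m \equiv 2p \pmod{3}$) and with the convolution relation $\lambda(p-n) = \lambda(p-1)\lambda(n)$ coming from \eqref{eq:convProp} lets one cancel the common factor $\lambda(p-1) \in \{\pm 1\}$ and conclude $\lambda(n) = -\lambda(m)$ in this range as well.

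Finally, the displayed consequences follow by iterating \eqref{eq:dilates}: taking $\xi = 3^{j-1} \pmod{p}$ gives $S_{\la}(3^j) = -S_{\la}(3^{j-1})$, and induction on $j$ (with trivial base case $j = 0$) produces $S_{\la}(3^j) = (-1)^j S_{\la}(1)$, and identically $S_{\la}(2^k) = (-1)^k S_{\la}(1)$. I expect the main difficulty to be precisely the indirect argument in the middle range for $c = 3$, where the desired relation cannot be read off from a single application of Lemma \ref{lem:relations} but must be routed through the reflected index $p - n$.
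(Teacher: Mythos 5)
Your proof is correct, but it takes a genuinely different route from the paper's. You prove the dilation identity pointwise by a direct change of variables: since $n \mapsto cn \bmod p$ is a bijection of $\{1,\dots,p-1\}$ and $e(cn\xi/p)$ depends only on $cn \bmod p$, the identity $S_{\lambda}(c\xi) = -S_{\lambda}(\xi)$ follows immediately once one knows the termwise relation $\lambda(cn \bmod p) = \lambda(cn)$ for every $n$. The paper instead works in $L^2$: it computes the correlation $\frac{1}{p}\sum_{\xi} S_{\lambda}(c\xi)\bar{S}_{\lambda}(\xi) = -(p-1)$, invokes Plancherel (its \eqref{eq:ParsDil}) to see that both sides have equal norm, and then expands $\frac{1}{p}\sum_{\xi}|S_{\lambda}(c\xi)+S_{\lambda}(\xi)|^2 = 0$ to extract the pointwise identity from the equality case. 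The arithmetic core is identical in both proofs — the same termwise facts, established via Lemma \ref{lem:relations} and \eqref{eq:convProp}; in particular your treatment of the range $p < 3n < 2p$ (writing $2p - m = 3(p-n)$, then applying Lemma \ref{lem:relations}(c) and $\lambda(p-n) = \lambda(p-1)\lambda(n)$, and cancelling $\lambda(p-1)$) is arithmetically the same manipulation the paper performs when it writes $\lambda(m)\lambda\bigl(\frac{p+m}{3}\bigr) = \lambda(m)\lambda(p-1)\lambda\bigl(\frac{2p-m}{3}\bigr) = -1$. What your route buys is economy: no Fourier-analytic input beyond the definition of $S_{\lambda}$, and no need to verify that the correlation sum is real or to pass through the equality case of an $L^2$ identity. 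What the paper's route buys is a template it reuses verbatim in Section \ref{sec:iter}: the proof of Proposition \ref{prop:dilation} for general $d = q$ is framed as the same correlation computation, reduced to the statement \eqref{eq:equivcond}; your substitution argument would in fact work there equally well, since both mechanisms bottom out in the same termwise claim.
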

\begin{proof}
The second claim follows by induction on $j$ and $k$ from the former, so it suffices to prove \eqref{eq:dilates}. \\
First, note that as $p > 3$ the maps $\xi \mapsto 2\xi$ and $\xi \mapsto 3\xi$ are both bijections on $\mb{Z}/p\mb{Z}$. By Plancherel's theorem,
\begin{equation}\label{eq:ParsDil}
\frac{1}{p}\sum_{\xi \pmod{p}} |S_{\lambda}(m\xi)|^2 = \frac{1}{p} \sum_{\xi' \pmod{p}} |S_{\la}(\xi')|^2 = p-1 \text{ for all } m \in \{1,2,3\}.
\end{equation}
Next, note that
$$
\frac{1}{p}\sum_{\xi \pmod{p}} S_{\lambda}(2\xi) \bar{S}_{\lambda}(\xi) = \sum_{m,n < p}\lambda(n)\lambda(m) \frac{1}{p} \sum_{\xi \pmod{p}} e\left(\xi\frac{2n-m}{p}\right) = \sum_{\ss{m,n < p \\ 2n \equiv m \pmod{p}}} \lambda(m)\lambda(n).
$$
Among $m,n < p$ with $2n \equiv m \pmod{p}$ we have that either $2n = m$ precisely when $m$ is even, or else $2n = m+p$ precisely when $m$ is odd. If $2n = m$ then
$$
\lambda(n)\lambda(m) = \lambda(m/2)\lambda(m) = -\lambda(m)^2 = -1,
$$
while if $2n = p+m$ then by Lemma \ref{lem:relations}(a) we have
$$
\lambda(n)\lambda(m) = \lambda\left(\frac{p+m}{2}\right)\lambda(m) = -\lambda(p+m)\lambda(m) = -1.
$$ 
It follows that
\begin{align*}
\frac{1}{p}\sum_{\xi \pmod{p}} S_{\lambda}(2\xi) \bar{S}_{\lambda}(\xi) &= \sum_{\ss{m < p \\ m \text{ even}}}\lambda(m)\lambda(m/2) + \sum_{\ss{m < p \\ m \text{ odd}}} \lambda(m)\lambda\left(\frac{p+m}{2}\right) \\
&= -\left(\sum_{\ss{m < p \\ m \text{ even}}} 1 + \sum_{\ss{m < p \\ m \text{ odd}}} 1\right) = -(p-1).
\end{align*}
This latter sum being real-valued, it follows from this and \eqref{eq:ParsDil} that
\begin{align*}
\frac{1}{p} \sum_{\xi \pmod{p}}|S_{\lambda}(2\xi) + S_{\lambda}(\xi)|^2 &= \frac{1}{p}\sum_{\xi \pmod{p}} (|S_{\lambda}(\xi)|^2 + |S_{\lambda}(2\xi)|^2 + 2\text{Re}(S_{\lambda}(2\xi)\bar{S}_{\lambda}(\xi))) \\
&= 2(p-1) - 2(p-1) = 0.
\end{align*}
Therefore, $S_{\lambda}(2\xi) = -S_{\lambda}(\xi)$ for all $\xi \pmod{p}$, as claimed. \\
The proof with $2$ replaced by $3$ follows similar lines, using Lemma \ref{lem:relations}. Here, instead we must treat pairs $1 \leq n,m < p$ with $3n \equiv m \pmod{p}$, or equivalently $3n = m+jp$, where $0 \leq j < 3$ and $m+jp \equiv 0 \pmod{3}$ in each case. (Note that as $p > 3$, the residue classes $0,-p,-2p \pmod{3}$ cover all integers $1 \leq m < p$.) Precisely,
\begin{align*}
&\frac{1}{p}\sum_{\xi \pmod{p}} S_{\lambda}(3\xi) \bar{S}_{\lambda}(\xi) = \sum_{m,n < p}\lambda(n)\lambda(m) \frac{1}{p} \sum_{\xi \pmod{p}} e\left(\xi\frac{3n-m}{p}\right) = \sum_{\ss{m, n < p \\ 3n \equiv m \pmod{p}}} \lambda(m)\lambda(n) \\
&= \left(\sum_{\ss{m < p \\ 3|m}} \lambda(m/3)\lambda(m) + \sum_{\ss{m < p \\ m \equiv -2p \pmod{3}}} \lambda(m)\lambda\left(\frac{2p+m}{3}\right) + \sum_{\ss{m < p \\ m \equiv -p \pmod{3}}} \lambda(m)\lambda\left(\frac{p+m}{3}\right)\right).
\end{align*}
By (b) and (c) of Lemma \ref{lem:relations},
\begin{align*}
&\lambda(m) \lambda(m/3) = \lambda(3)\lambda(m)^2 = -1 &\text{ if } 3|m,\\
&\lambda(m)\lambda\left(\frac{2p+m}{3}\right) = \lambda(3)\lambda(m)^2 = -1 &\text{ if } m \equiv -2p \equiv p \pmod{3}, \\
&\lambda(m)\lambda\left(\frac{p+m}{3}\right) = \lambda(m) \lambda(p-1) \lambda\left(\frac{2p-m}{3}\right) = \lambda(3)\lambda(m)^2 = -1 &\text{ if } m \equiv -p \equiv 2p \pmod{3}.
\end{align*}
We thus conclude that
$$
\frac{1}{p}\sum_{\xi \pmod{p}} S_{\lambda}(3\xi) \bar{S}_{\lambda}(\xi) = -\left(\sum_{\ss{m < p \\ 3|m}} 1 + \sum_{\ss{m < p \\ m \equiv -2p \pmod{3}}} 1 + \sum_{\ss{m < p \\ m \equiv - p \pmod{3}}} 1 \right) = -(p-1).
$$
The claim that $S_{\la}(3\xi) = - S_{\la}(\xi)$ for all $\xi \pmod{p}$ now follows as it did with $2\xi$. 
\end{proof}
It is natural, then, to speculate that $2$ or $3$ may be replaced by other primes as well. In fact, we will prove the following more general result in the next section.
\begin{prop}\label{prop:dilation}
Let $p$ be a prime with $|\mc{L}_{\la}(p)| = p-1$, and let $1 \leq d < p$. Then we have
\begin{equation}\label{eq:transform}
S_{\lambda}(d\xi) = \lambda(d) S_{\lambda}(\xi) \text{ for all } \xi \pmod{p}.
\end{equation}
\end{prop}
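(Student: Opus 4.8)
The plan is to follow the harmonic-analytic scheme of Lemma~\ref{lem:23dilate} essentially verbatim, reducing \eqref{eq:transform} to a single arithmetic assertion about $\lambda$ which is then proved by descent. First I would compute the correlation of $S_\lambda(d\,\cdot)$ with $S_\lambda$. Since $1\le d<p$ we have $\gcd(d,p)=1$, so $\xi\mapsto d\xi$ is a bijection of $\Z/p\Z$ and, exactly as in Lemma~\ref{lem:23dilate},
$$
\frac1p\sum_{\xi \pmod p} S_\lambda(d\xi)\,\bar{S}_\lambda(\xi)=\sum_{\substack{1\le m,n<p\\ dn\equiv m \pmod p}}\lambda(m)\lambda(n)=\sum_{1\le m<p}\lambda(m)\lambda(n_m),
$$
where for each $m$ the companion $n_m\in[1,p-1]$ is the unique solution of $dn_m\equiv m\pmod p$, that is $dn_m=m+j_mp$ for the unique $0\le j_m<d$ with $d\mid m+j_mp$. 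Complete multiplicativity gives $\lambda(n_m)=\lambda(d)\lambda(m+j_mp)$, so the correlation equals $\lambda(d)\sum_{1\le m<p}\lambda(m)\lambda(m+j_mp)$, which is real. By Parseval (cf.\ \eqref{eq:ParsDil}) both $\frac1p\sum_\xi|S_\lambda(d\xi)|^2$ and $\frac1p\sum_\xi|S_\lambda(\xi)|^2$ equal $\sum_{n<p}|\lambda(n)|^2=p-1$. Hence if I can show that $\lambda(m)\lambda(m+j_mp)=+1$ for every $1\le m<p$, then the correlation is $\lambda(d)(p-1)$ and, using $\lambda(d)^2=1$,
$$
\frac1p\sum_{\xi \pmod p}\bigl|S_\lambda(d\xi)-\lambda(d)S_\lambda(\xi)\bigr|^2=(p-1)+(p-1)-2\lambda(d)\cdot\lambda(d)(p-1)=0,
$$
which forces \eqref{eq:transform}. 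This is precisely the device used at the end of Lemma~\ref{lem:23dilate}.

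Everything therefore reduces to the claim that $\lambda(m+jp)=\lambda(m)$ whenever $1\le m<p$, $0\le j<d$ and $m\equiv -jp\pmod d$. I would establish this by descent on $j$, the case $j=0$ being trivial. The two permitted moves are complete multiplicativity, which splits off a prime factor $q$ of $m+jp$ at the cost of a sign $\lambda(q)=-1$, and the reflection identity \eqref{eq:convProp}, namely $\lambda(a)=\lambda(p-1)\lambda(p-a)$ for $a\in[1,p-1]$, which folds an argument back into the fundamental domain $[1,p-1]$. The aim of one step is exactly as indicated in the introduction: to rewrite $\lambda(m)\lambda(m+jp)$ as $(\pm1)\,\lambda(m')\lambda(m'+j'p)$ with $0\le j'<j$ and $m'\equiv -j'p\pmod d$, the accumulated sign being $+1$; iterating down to $j'=0$ then gives $\lambda(m)\lambda(m+jp)=+1$. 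The cases $d=2,3$ treated in Lemmas~\ref{lem:relations} and~\ref{lem:23dilate} are the prototypes, where a single use of \eqref{eq:convProp} applied to a decomposition such as $\tfrac{2p+m}{3}+\tfrac{p-m}{3}=p$, together with factoring out the prime $d$, already closes the argument. It is also worth noting that it suffices to treat $d$ prime: if \eqref{eq:transform} holds for $d_1$ and $d_2$ then $S_\lambda(d_1d_2\xi)=\lambda(d_1)S_\lambda(d_2\xi)=\lambda(d_1d_2)S_\lambda(\xi)$, so the composite case follows by multiplicativity.

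I expect the descent to be the main obstacle, and it is carried out in Section~\ref{sec:iter}. The difficulty is to arrange a single step that simultaneously (i) keeps every argument of $\lambda$ inside $[1,p-1]$, so that \eqref{eq:convProp} is applicable; (ii) strictly decreases the parameter $j$; and (iii) produces signs that cancel to $+1$. Neither dividing out a prime nor the reflection $a\mapsto p-a$ respects the congruence $m'\equiv -j'p\pmod d$ in an obvious way, so the delicate point is to interleave these operations while controlling both the residue $m'$ modulo $d$ and the running sign. This combinatorial bookkeeping, rather than any further analytic input, is where the real work lies.
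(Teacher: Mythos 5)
Your harmonic-analytic reduction is correct and is exactly the paper's: Parseval plus the correlation computation reduce \eqref{eq:transform} to the claim that $\lambda(m)\lambda(m+jp)=+1$ whenever $1\le m<p$, $0\le j<d$ and $m\equiv -jp\pmod d$, and the multiplicativity trick correctly reduces everything to $d=q$ prime. But this is precisely where the paper's work begins, and your proposal stops there: you state what a descent step would have to accomplish, and then acknowledge that you cannot exhibit one. That missing step is the entire content of Section \ref{sec:iter}, so what you have is a (correct) reduction, not a proof.

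Concretely, two ideas are absent. First, the explicit descent map: the paper partitions the pairs $(m,j)$ into sets $\mc{A}_{q,r}$ according to $pq/(r+1)<m+jp<pq/r$ and applies $\psi_r(m,j)=(m',j')$ with $m'+j'p=pq-r(m+jp)$; a single use of multiplicativity and one reflection via \eqref{eq:convProp}, applied to $r(m+jp)/q$, which lies in $[1,p-1]$ exactly because of the definition of $\mc{A}_{q,r}$, yields $\lambda(m)\lambda(m+jp)=\bigl[\lambda(rm)\lambda\left(p\{rm/p\}\right)\bigr]\lambda(m')\lambda(m'+j'p)$, and $(m',j')$ lands in some $\mc{A}_{q,s}$ with $s\ge r+1$, so the iteration terminates at $j'=0$. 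Second --- and this is where your closing guess goes wrong --- the accumulated signs $\lambda(rm)\lambda\left(p\{rm/p\}\right)$ are not disposed of by ``combinatorial bookkeeping'' with multiplicativity and reflection alone; they require further analytic input. The proposition is proved by an outer induction on $d$, and the induction hypothesis (dilation for every $r<q$) is converted by Fourier inversion (Lemma \ref{eq:inductiveperiod}) into the periodicity statement $\lambda\left(p\{rm/p\}\right)=\lambda(rm)$ for all $1\le m<p$ and $r<q$, which is exactly the statement that each bracketed factor equals $+1$. Your two ``permitted moves'' by themselves reproduce only the cases $r=2,3$ of Lemma \ref{lem:relations}; without the induction-plus-Fourier mechanism there is no apparent way to control the signs for general $r$, and the descent does not close.
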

%
Proposition \ref{prop:dilation} will be beneficial in light of the following result.
\begin{lem} \label{lem:2primroot}
Let $p>2$ be a prime satisfying $|\mc{L}_{\la}(p)| = p-1$. Suppose $2 \leq d < p$ is a primitive root modulo $p$ such that \eqref{eq:transform} holds. 
Then $\lambda(n) = \chi_p(n)$ for all $n < p$. 
\end{lem}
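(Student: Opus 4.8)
The plan is to exploit the hypothesis that $d$ is a primitive root in order to compute \emph{every} exponential sum $S_{\la}(\xi)$ in terms of the single value $S_{\la}(1)$, and then to recover $\la$ on $[1,p-1]$ by Fourier inversion over $\mb{Z}/p\mb{Z}$. Since $\xi \mapsto d\xi$ generates all of $(\mb{Z}/p\mb{Z})^{\ast}$ when started from $\xi = 1$, I would first iterate \eqref{eq:transform} to obtain $S_{\la}(\xi) = \la(d)^{\text{ind}(\xi)} S_{\la}(1)$ for every $\xi \not\equiv 0 \pmod p$, where $\text{ind}(\xi)$ denotes the index (discrete logarithm) of $\xi$ to the base $d$. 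In particular $|S_{\la}(\xi)|$ is the same for all nonzero $\xi$, mirroring the behaviour of the twisted Gauss sums $\tau(\chi_p,\xi)$.

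The first substantive step is to determine the sign $\la(d)$. I would rule out $\la(d) = +1$: in that case $S_{\la}(\xi) = S_{\la}(1)$ for every $\xi \neq 0$, and Fourier inversion together with $\sum_{\xi \neq 0} e(-n\xi/p) = -1$ for $1 \leq n < p$ would force $\la(n) = (S_{\la}(0) - S_{\la}(1))/p$ to be \emph{constant} on $1 \leq n < p$, contradicting $\la(1) = 1 \neq -1 = \la(2)$ (recall $p > 2$). Hence $\la(d) = -1$. The key observation is now that, because $d$ is a primitive root, the Legendre symbol admits the description $\chi_p(\xi) = (-1)^{\text{ind}(\xi)}$; therefore $\la(d)^{\text{ind}(\xi)} = (-1)^{\text{ind}(\xi)} = \chi_p(\xi)$ and we obtain
$$
S_{\la}(\xi) = \chi_p(\xi)\, S_{\la}(1) \quad \text{for all } \xi \not\equiv 0 \pmod p,
$$
which is exactly the dilation law satisfied by the twisted Gauss sums.

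With this identity in hand I would apply Fourier inversion to $n \mapsto \la(n)1_{[1,p-1]}(n)$. Separating the frequency $\xi = 0$ and using the standard evaluation $\sum_{\xi}\chi_p(\xi)e(-n\xi/p) = \chi_p(-1)\chi_p(n)\tau(\chi_p,1)$ for $(n,p)=1$, this yields a relation of the form $\la(n) = A + B\chi_p(n)$, valid for all $1 \leq n < p$, where $A = S_{\la}(0)/p$ and $B = \chi_p(-1)\tau(\chi_p,1)S_{\la}(1)/p$. Since $\la(n)$ and $\chi_p(n)$ both take only the values $\pm 1$, evaluating at a quadratic residue and at a non-residue forces $A,B \in \mb{R}$ with $A+B, A-B \in \{-1,+1\}$; the four resulting possibilities for $(A,B)$ are $(\pm 1, 0)$ and $(0,\pm 1)$. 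Testing against the known values $\la(1) = 1$ and $\la(2) = -1$ eliminates all but $(A,B) = (0,1)$, which gives precisely $\la(n) = \chi_p(n)$ for all $n < p$, as required.

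The conceptual content is concentrated in the first two steps; the main point to handle carefully is pinning down $\la(d) = -1$ (equivalently, excluding the degenerate case $B = 0$ in which $\la$ would be constant), since everything downstream is the routine, if slightly fiddly, bookkeeping of the Fourier inversion and the four-way case check. I do not expect the Gauss sum $\tau(\chi_p,1)$ or the value $S_{\la}(1)$ to require explicit evaluation, as the constraint $\la,\chi_p \in \{\pm 1\}$ already forces $A = 0$ and $B = 1$.
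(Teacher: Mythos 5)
Your proof is correct, and its skeleton is the same as the paper's: iterate \eqref{eq:transform} along powers of the primitive root to get $S_{\la}(\xi) = \la(d)^{\mathrm{ind}(\xi)}S_{\la}(1)$ for $\xi \not\equiv 0$, then recover $\la$ on $[1,p-1]$ by Fourier inversion over $\mb{Z}/p\mb{Z}$ and match against the Gauss sum. However, you execute the two pivotal verifications differently, and in both cases more economically. To exclude $\la(d) = +1$ the paper invokes Plancherel \eqref{eq:ParsDil} to force $|S_{\la}(1)| = 1$ and $|S_{\la}(0)| = p-1$, then contradicts this with $|S_{\la}(0)| \leq p-3$ (which uses $\la(1)+\la(2)=0$); you instead observe that $\la(d)=+1$ makes all nonzero Fourier coefficients equal, so inversion forces $\la$ to be \emph{constant} on $[1,p-1]$, contradicting $\la(1) \neq \la(2)$ directly, with no Plancherel needed. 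Second, the paper proves $S_{\la}(0)=0$ (from the vanishing of $\sum_{\xi} S_{\la}(\xi)$ once $\la(d)=-1$) \emph{before} inverting, and then pins the constant $S_{\la}(1)\overline{\tau(\chi_p)}/p$ by evaluating at $n=1$; you never compute $S_{\la}(0)$, instead writing $\la(n) = A + B\chi_p(n)$ with $A = S_{\la}(0)/p$ and exploiting that a $\{\pm 1\}$-valued function of this shape, tested at one residue and one nonresidue, forces $(A,B) \in \{(\pm 1,0),(0,\pm 1)\}$, after which $\la(1)=1$, $\la(2)=-1$ leave only $(A,B)=(0,1)$. Both mechanisms are sound, and your four-case analysis recovers $S_{\la}(0)=0$ as a corollary rather than needing it as an input. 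What the paper's route buys is the explicit identity $S_{\la}(1)\overline{\tau(\chi_p)} = p$, an exact evaluation of $S_{\la}(1)$ in keeping with the Gauss-sum analogy; what yours buys is a shorter, more elementary argument resting only on the rigidity of $\pm 1$-valued functions.
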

\begin{proof}
As $d$ is a primitive root, every $\xi \not \equiv 0 \pmod{p}$ can be written as $\xi \equiv d^k \pmod{p}$ for some $1 \leq k \leq p-1$, and thus $S_{\la}(\xi) = S_{\la}(d^k)$. By \eqref{eq:transform} and induction, it follows that $S_{\lambda}(d^k) = \lambda(d)^k S_{\lambda}(1)$ for all $k \geq 1$. In particular, $|S_{\la}(\xi)|  = |S_{\la}(1)|$ for all $\xi \not \equiv 0 \pmod{p}$. \\
On the basis of these observations, we first verify that $\lambda(d) = -1$. Since $p$ is odd, 
\begin{align}\label{eq:withdilation}
0 = \sum_{\xi \pmod{p}} S_{\lambda}(\xi) = S_{\lambda}(0) + \sum_{k = 1}^{p-1} S_{\lambda}(d^k) &= S_{\lambda}(0) + S_{\lambda}(1) \sum_{k=1}^{p-1} \lambda(d)^k \nonumber \\
&= S_{\lambda}(0) + (p-1)S_{\la}(1)1_{\lambda(d) = +1}.
\end{align}
Now supposing $\lambda(d) = +1$ then we have $S_{\lambda}(0) = -(p-1)S_{\lambda}(1)$. On the other hand, since $|S_{\lambda}(\xi)| = |S_{\lambda}(1)|$ for all $\xi \not \equiv 0 \pmod{p}$, we find using \eqref{eq:ParsDil} that
$$
p(p-1) = \sum_{\xi \pmod{p}} |S_{\lambda}(\xi)|^2 = |S_{\lambda}(0)|^2 + |S_{\lambda}(1)|^2 (p-1)= |S_{\lambda}(1)|^2((p-1)^2 + p-1) = |S_{\lambda}(1)|^2 p(p-1).
$$
We deduce that $|S_{\lambda}(1)| = 1$, and thus $|S_{\lambda}(0)| = p-1$. But if $p = 3$ we have $|S_{\lambda}(0)| = 0 \neq 2$ since $\lambda(1) = -\lambda(2)$, and for $p > 3$ we have 
$$
|S_{\lambda}(0)| = \left|\sum_{1 \leq n < p} \lambda(n)\right| = \left|\sum_{3 \leq n < p} \lambda(n)\right| \leq p-3 < p-1,
$$
a contradiction. Thus, we conclude that $\lambda(d) = -1$, and it follows further from \eqref{eq:withdilation} that $S_{\lambda}(0) = 0$. \\
Next, as $d$ is a primitive root and $\chi_p$ is a non-principal character, $\chi_p(d) = -1$. Hence, 
$$
S_{\lambda}(d^k) = \lambda(d)^k S_{\lambda}(1) = (-1)^k S_{\lambda}(1) = \chi_p(d)^k S_{\lambda}(1) \text{ for all } k.
$$
Now for each $n < p$, we have using $S_{\lambda}(0) = 0$ that
$$
\lambda(n) = \sum_{m < p} \lambda(m) 1_{m \equiv n \pmod{p}} = \frac{1}{p} \sum_{\xi \pmod{p}} e\left(-\frac{n\xi}{p}\right) \sum_{m < p} \lambda(m) e\left(\frac{m\xi}{p}\right) = \frac{1}{p}\sum_{\ss{\xi \pmod{p} \\ \xi \not \equiv 0 \pmod{p}}} S_{\lambda}(\xi) e\left(-\frac{n\xi}{p}\right).
$$
Reparametrising $\xi \in (\mb{Z}/p\mb{Z})^{\times}$ as $d^k \pmod{p}$ for $1 \leq k \leq p-1$, we get
$$
\lambda(n) = \frac{1}{p}\sum_{k = 1}^{p-1} S_{\lambda}(d^k) e\left(-\frac{nd^k}{p}\right) = \frac{S_{\lambda}(1)}{p} \sum_{k = 1}^{p-1} \chi_p(d)^k e\left(-\frac{nd^k}{p}\right) = \frac{S_{\lambda}(1)}{p} \sum_{\xi \pmod{p}} \chi_p(\xi) e\left(-\frac{n\xi}{p}\right).
$$
Using standard relations for Gauss sums, we readily find that for any $n < p$,
$$
\lambda(n) = \chi_p(n) \cdot \frac{S_{\lambda}(1) \overline{\tau(\chi_p)}}{p},
$$
where, as usual,
$$
\tau(\chi_p) := \sum_{a \pmod{p}} \chi_p(a) e\left(\frac{a}{p}\right).
$$
If we set $n = 1$ then we plainly have $S_{\lambda}(1)\overline{\tau(\chi_p)}=p$, and the claim follows.
\end{proof}
There are clearly two ways in which Lemma \ref{lem:2primroot} may be used. One is to derive information about the Liouville function using corresponding behaviour of Dirichlet characters; this appears hard to do since we only know that $\lambda$ and $\chi_p$ are comparable within the fundamental domain $[1,p-1]$. \\
The other way is to obtain constraints on the behaviour of Dirichlet characters from the rigidity of the Liouville function, in particular at primes. In this direction we deduce the following.
\begin{prop} \label{prop:LlamBd}
There are effectively computable constants $C_1,p_1 > 0$ such that the following holds: if $p\geq p_1$ is a prime for which $|\mc{L}_{\lambda}(p)| = p-1$ then $L(1,\chi_p) \leq C_1(\log p)/p^{1/6}$.
\end{prop}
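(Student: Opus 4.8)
The plan is to convert the hypothesis into a statement about the values of $\chi_p$ at primes, reduce $L(1,\chi_p)$ to a partial sum of $\lambda$, and then produce a power saving.

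First I would record the structural consequence. Since $p$ is prime it admits a primitive root $d$ with $2\le d<p$, and by Proposition \ref{prop:dilation} the dilation identity \eqref{eq:transform} holds for this $d$. Lemma \ref{lem:2primroot} then gives $\lambda(n)=\chi_p(n)$ for all $1\le n<p$. As $\lambda$ and $\chi_p$ are completely multiplicative and every integer below $p$ is $p$-smooth, this self-improves: $\lambda(n)=\chi_p(n)$ for \emph{every} $p$-smooth integer $n$, of any size; equivalently $\chi_p(q)=-1$ for all primes $q<p$. Next, truncating $L(1,\chi_p)=\sum_{n\ge 1}\chi_p(n)/n$ at $n=p$ and estimating the tail by partial summation together with the P\'olya--Vinogradov bound $|\sum_{n\le t}\chi_p(n)|\ll\sqrt p\log p$ gives
\[
L(1,\chi_p)=\sum_{n<p}\frac{\chi_p(n)}{n}+O\Big(\frac{\log p}{\sqrt p}\Big)=\sum_{n<p}\frac{\lambda(n)}{n}+O\Big(\frac{\log p}{\sqrt p}\Big),
\]
the last equality using the previous paragraph. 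Thus the task reduces to bounding a partial sum of $\lambda(n)/n$.

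The difficulty is that the main term $\sum_{n<p}\lambda(n)/n$ is a partial sum of the Liouville function, for which the prime number theorem yields only $\sum_{n<p}\lambda(n)/n\ll\exp(-c\sqrt{\log p})$; this exceeds every fixed power $p^{-\e}$ and so is too weak to contradict Siegel's lower bound $L(1,\chi_p)\gg_{\e}p^{-\e}$. (I also expect that positivity-based methods — e.g.\ estimating $\sum_{n\le x}(1\ast\chi_p)(n)=L(1,\chi_p)x+O(\cdots)$ and using that $(1\ast\chi_p)(n)$ vanishes unless the smooth part of $n$ is a square — only reach $L(1,\chi_p)\ll 1/\log p$, because of the uncontrolled primes $q\ge p$.) To obtain a genuine power saving I would not truncate at $p$ but exploit the agreement of $\lambda$ and $\chi_p$ on \emph{all} smooth numbers through the completely multiplicative function $f:=\lambda\chi_p$, which equals $1$ on $p$-smooth $n$ and satisfies the Euler-product identity $\sum_n f(n)n^{-s}=\zeta(2s)(1-p^{-2s})/L(s,\chi_p)$. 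The partial sums $\sum_{n\le x}f(n)=\lfloor x\rfloor$ are maximal for $x<p$, which stands in tension with the regularity of this Dirichlet series to the right of $s=1/2$. Quantifying this tension by a Perron/contour argument, with the shifted integral controlled by Burgess's subconvex bound $L(\tfrac12+it,\chi_p)\ll_{\e}p^{3/16+\e}(1+|t|)^{A}$, should convert the rigidity into a power-saving upper bound, and optimizing the cutoff is what I expect to produce the exponent $1/6$.

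The main obstacle is precisely this final step: P\'olya--Vinogradov and the prime number theorem are insufficient, so the saving must come from the subconvex input, and the delicate points are (i) controlling the contribution of integers divisible by a prime $q\ge p$, where $\chi_p$ is unconstrained, and (ii) carrying out the contour shift while keeping $L(s,\chi_p)$ away from zero along the relevant vertical segment, the exponent in the end reflecting the strength of the available estimate. All constants here are effective. Once the bound $L(1,\chi_p)\le C_1(\log p)/p^{1/6}$ is in place, comparison with Siegel's (ineffective) theorem forces $p\le p_1$, which is the intended conclusion.
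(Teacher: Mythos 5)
Your opening reduction is exactly the paper's: since $p$ is prime it has a primitive root $d$, Proposition \ref{prop:dilation} supplies the dilation identity for $d$, and Lemma \ref{lem:2primroot} gives $\lambda(n)=\chi_p(n)$ for all $n<p$, hence $\chi_p(q)=-1$ for all primes $q<p$. Your observation that truncating $L(1,\chi_p)$ at $p$ and invoking the prime number theorem yields only $O(\exp(-c\sqrt{\log p}))$, which is too weak to contradict Siegel, is also correct and well taken. The gap is in your final step. The Dirichlet series you introduce, $\sum_n f(n)n^{-s}=\zeta(2s)(1-p^{-2s})/L(s,\chi_p)$, has $L(s,\chi_p)$ in the \emph{denominator}, so any Perron/contour-shift argument requires \emph{lower} bounds on $|L(s,\chi_p)|$ along the contour, i.e.\ a zero-free region near the line of integration. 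No such region is available unconditionally anywhere near $\text{Re}(s)=1/2$ (on GRH every nontrivial zero lies \emph{on} that line), and routing the contour around the zeros is hopeless since their locations are precisely what one cannot control; their residues carry contributions as inaccessible as the quantity being bounded. Burgess's estimate, which you invoke, is an \emph{upper} bound for $L(\tfrac12+it,\chi_p)$ and gives no control whatsoever over $1/L(s,\chi_p)$; in particular the exponent $1/6$ cannot emerge from it in the way you expect. So the ``tension'' you describe, while real (it is the Siegel-zero phenomenon), cannot be quantified by the argument you sketch.

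Ironically, the route you dismiss in your parenthetical remark is the one the paper actually carries out. The paper keeps $1\ast\chi_p$ in the \emph{numerator} and truncates at $p$: since every divisor of an integer $n<p$ is itself $<p$, one has $(1\ast\chi_p)(n)=(1\ast\lambda)(n)$, the indicator that $n$ is a perfect square, on the whole range of summation --- so the ``uncontrolled primes $q\ge p$'' never enter; they are excluded by the truncation, not controlled. The error-term obstruction that makes the unsmoothed hyperbola/P\'olya--Vinogradov estimate useless (an $O(p\log p)$ error against a main term $pL(1,\chi_p)$) is defeated by smoothing, not by positivity alone: Lemma \ref{lem:LiVigen} evaluates the Fej\'er-weighted sum $\sum_{n<p}(1-n/p)(1\ast\chi_p)(n)$ by shifting the Mellin contour to $\text{Re}(s)=1/2$, where only \emph{upper} bounds are needed, namely the van der Corput subconvexity bound $\zeta(1/2+it)\ll(2+|t|)^{1/6}\log(2+|t|)$ together with P\'olya--Vinogradov applied to the finite sum $\sum_{d<p}\chi_p(d)d^{-s}$. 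This yields
$$
\frac{p}{2}L(1,\chi_p)+O\left(p^{5/6}\log p\right)=\sum_{n<p}\left(1-\frac{n}{p}\right)(1\ast\chi_p)(n)\leq\sqrt{p},
$$
whence $L(1,\chi_p)\ll(\log p)p^{-1/6}$ with effective constants: the exponent $1/6$ is the zeta subconvexity exponent, not a Burgess exponent for $L(s,\chi_p)$. Replacing your third step by this smoothed, truncated positivity argument turns your write-up into the paper's proof.
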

\begin{rem}\label{rem:STBd}
We highlight here the following (consequence of a) well-known result of Tatuzawa \cite{Tat}: with the exception of at most one prime $\tilde{p}$, given $\delta \in (0,1/12]$ we have for all $p \geq e^{1/\delta}$, $p\neq \tilde{p}$, that
$$
L(1,\chi_p) \geq \frac{3\delta}{5p^{\delta}}.
$$
This allows us to give an effective range (outside of a possible lone exception) for the possible primes $p$ for which $|\mc{L}_{\lambda}(p)| = p-1$. For instance, taking $\delta = 1/12$, the above bound combines with Proposition \ref{prop:LlamBd} to show that when $p \geq \max\{p_1,e^{12}\}$ and $p \neq \tilde{p}$,
$$
p^{1/6} \leq 20 C_1 \log p = 120 C_1 \log(p^{1/6}).
$$
When $p \geq e^{12}$ we have $z := p^{1/6} > 4$. Setting $R := 120 C_1$, we find that $z < R \log z < R z^{1/2}$. We may thus conclude that $p < \max\{p_1,e^{12},R^{12}\}$, provided $p \neq \tilde{p}$. \\
A back-of-the-envelope calculation invoking explicit bounds for (a) the Riemann zeta function on the critical line (see e.g. \cite{Trud}) and (b) the constant in the P\'{o}lya-Vinogradov inequality (see e.g. \cite{BaRa}) yield the constants $C_1 = 12.5$ and $p_1 = e^{24}$ here, which means that any $p \neq \tilde{p}$ with $p > e^{88}$ satisfies $|\mc{L}_{\la}(p)| < p-1$. 
\end{rem}
The proof of Proposition \ref{prop:LlamBd} is based on the following result, which makes more precise an old result due to Linnik and Vinogradov \cite{LiVi}.
\begin{lem}\label{lem:LiVigen}
There are effectively computable constants $p_2,C_2 > 0$ such that if $p \geq p_2$ then
$$
\left|\sum_{n < p} \left(1-\frac{n}{p}\right) \sum_{d|n} \chi_p(d) -  \frac{p}{2} L(1,\chi_p)\right|\leq C_2p^{5/6}\log p.
$$
\end{lem}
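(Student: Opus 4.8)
The plan is to open the divisor sum and reduce everything to an incomplete character sum carrying the Fej\'er-type weight $1 - n/p$. Writing $n = dm$ with $d \mid n$ and using that $p$ is prime (so $\chi_p(d) \neq 0$ for every $1 \leq d < p$), the target sum becomes
$$
T := \sum_{n < p}\left(1 - \frac{n}{p}\right)\sum_{d \mid n}\chi_p(d) = \sum_{d=1}^{p-1}\chi_p(d)\,\Sigma_d, \qquad \Sigma_d := \sum_{1 \leq m < p/d}\left(1 - \frac{dm}{p}\right).
$$
Since the summands of $\Sigma_d$ run over an arithmetic progression, I would evaluate $\Sigma_d$ exactly, obtaining an identity of the shape $\Sigma_d = \frac{p}{2d} - \frac12 + r_d$, where $r_d$ depends only on $(p-1) \bmod d$ and satisfies $|r_d| \leq d/p$. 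The leading term $\frac{p}{2d}$ is precisely what reconstructs $\tfrac{p}{2}L(1,\chi_p)$ upon summation against $\chi_p(d)$, so the whole problem becomes one of controlling the resulting error terms.

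Next I would split the $d$-sum at a parameter $D$ (ultimately of size a small power of $p$). For the small divisors $d \leq D$ the exact shape of $\Sigma_d$ is used directly:
$$
\sum_{d \leq D}\chi_p(d)\Sigma_d = \frac{p}{2}\sum_{d \leq D}\frac{\chi_p(d)}{d} - \frac12\sum_{d \leq D}\chi_p(d) + \sum_{d \leq D}\chi_p(d)r_d.
$$
Completing the first sum to $L(1,\chi_p) = \sum_{d \geq 1}\chi_p(d)/d$ costs $\tfrac{p}{2}\bigl|\sum_{d > D}\chi_p(d)/d\bigr| \ll p^{3/2}(\log p)/D$ by partial summation against the Pólya--Vinogradov bound $\bigl|\sum_{d \leq t}\chi_p(d)\bigr| \ll \sqrt{p}\log p$; the second sum is $\ll \sqrt{p}\log p$ by the same inequality; and the third is $\ll D^2/p$ by the trivial bound $|r_d| \leq d/p$.

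The crux, and the step I expect to be the main obstacle, is the large divisors $d > D$. Here $\Sigma_d$ is small but genuinely erratic (it is governed by $(p-1)\bmod d$), so applying Pólya--Vinogradov to $\sum_{d > D}\chi_p(d)\Sigma_d$ directly is hopeless. The key maneuver is to revert to the unfactored form and swap the order of summation: since $d > D$ and $dm < p$ force $m < p/D$,
$$
\sum_{\substack{d > D,\ m \geq 1 \\ dm < p}}\left(1 - \frac{dm}{p}\right)\chi_p(d) = \sum_{1 \leq m < p/D}\ \sum_{D < d < p/m}\left(1 - \frac{dm}{p}\right)\chi_p(d).
$$
For each fixed $m$ the weight $d \mapsto 1 - dm/p$ is monotone, takes values in $[0,1]$, and has total variation $\leq 1$, so partial summation together with Pólya--Vinogradov bounds the inner sum by $\ll \sqrt{p}\log p$; summing over the $\ll p/D$ admissible values of $m$ gives $\ll p^{3/2}(\log p)/D$. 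This swap is exactly what converts one intractable character sum against the erratic weight $\Sigma_d$ into $\ll p/D$ tame character sums against smooth linear weights, where the cancellation from $\chi_p$ can finally be harvested. Collecting all contributions yields
$$
\left|T - \frac{p}{2}L(1,\chi_p)\right| \ll \frac{D^2}{p} + \frac{p^{3/2}\log p}{D} + \sqrt{p}\log p,
$$
and balancing the first two terms (choosing $D \asymp p^{5/6}(\log p)^{1/3}$) makes the right-hand side $\ll p^{2/3}(\log p)^{2/3}$, which is comfortably within the asserted bound $C_2 p^{5/6}\log p$. Effectivity of $p_2, C_2$ is automatic, since the exact evaluation of $\Sigma_d$ and the Pólya--Vinogradov inequality are both effective.
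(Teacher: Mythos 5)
Your proof is correct, but it takes a genuinely different route from the paper. The paper follows Linnik--Vinogradov: it applies the Mellin identity $\sum_{n\leq y}(1-n/y)=\frac{1}{2\pi i}\int_{(\sigma_0)}\frac{y^s}{s(s+1)}\zeta(s)\,ds$ to the smoothed sum, shifts the contour to $\mathrm{Re}(s)=1/2$, extracts the main term $\frac{p}{2}L(1,\chi_p)$ as the residue at $s=1$, and controls the critical-line integral by combining the convexity bound $|\zeta(1/2+it)|\ll(|t|+2)^{1/6}\log(2+|t|)$ with P\'olya--Vinogradov applied (dyadically, via partial summation) to $\sum_{d<p}\chi_p(d)d^{-s}$; the exponent $5/6$ in the lemma is inherited directly from the $1/6$ exponent for $\zeta$ on the critical line. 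Your argument is entirely elementary and real-variable: the exact evaluation $\Sigma_d=\frac{p}{2d}-\frac12+r_d$ with $0<r_d\leq d/(8p)$ (your stated bound $|r_d|\leq d/p$ is correct, since $r_d=s(d-s)/(2pd)$ with $s=p\bmod d$) exploits the Fej\'er smoothing so that the per-term error in the small-$d$ range is $O(d/p)$ rather than $O(1)$, and the summation swap in the large-$d$ range reduces everything to $O(p/D)$ character sums with monotone weights, each handled by P\'olya--Vinogradov. Two remarks on what each approach buys. First, your bound $O\bigl(p^{2/3}(\log p)^{2/3}\bigr)$ is actually \emph{stronger} than the paper's $O(p^{5/6}\log p)$ --- no subconvexity input for $\zeta$ is needed, and fed into Proposition \ref{prop:LlamBd} it would yield $L(1,\chi_p)\ll p^{-1/3}(\log p)^{2/3}$, improving the exponent there (though this makes no difference to Theorem \ref{thm:main}, since any power saving contradicts Siegel's theorem). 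Second, the Mellin route is the more flexible template --- it adapts mechanically to other weights and would automatically convert improved bounds for $\zeta(1/2+it)$ into improved error terms --- but for this specific lemma your elementary decomposition is both simpler and sharper. One cosmetic imprecision: $r_d$ does not depend \emph{only} on $(p-1)\bmod d$ (it carries the factor $1/(2pd)$), but since you only use the bound $|r_d|\leq d/p$, nothing is affected. Effectivity of $p_2, C_2$ holds in both arguments.
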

\begin{proof}
We follow the strategy of \cite{LiVi}. Expanding the smoothed convolution sum gives
$$
S_p := \sum_{n < p} \left(1-\frac{n}{p}\right) (1\ast \chi_p)(n) = \sum_{d < p} \chi_p(d) \sum_{m < p/d} \left(1-\frac{m}{p/d}\right).
$$
Let $\sg_0 := 1+1/\log p$. Using the Mellin identity
$$
\sum_{n \leq y}\left(1-\frac{n}{y}\right) = \frac{1}{2\pi i} \int_{\sg_0 - i\infty}^{\sg_0+i\infty} \frac{y^s}{s(s+1)} \zeta(s) ds,
$$
we obtain that
$$
S_p = \frac{1}{2\pi i}\int_{\sg_0-i\infty}^{\sg_0+i\infty} \frac{p^s}{s(s+1)} \zeta(s) \left(\sum_{d < p} \frac{\chi_p(d)}{d^s}\right) ds.
$$
Truncating the integral at $\text{Im}(s) = p$ yields an error term of size
$$
\ll p \left(\sum_{d < p} \frac{1}{p}\right) \int_{|t| > p} |\zeta(\sg_0+it)| \frac{dt}{t^2} \ll (\log p)^2. 
$$
We shift the contour to the line $\text{Re}(s) = 1/2$, picking up a residue at the pole $s = 1$, of size
$$
\frac{p}{2} \sum_{d < p} \frac{\chi_p(d)}{d} = \frac{p}{2}L(1,\chi_p) + O\left(\sqrt{p}\log p\right),
$$
the error term arising from partial summation and the P\'{o}lya-Vinogradov inequality. Using the standard bounds (see e.g. \cite[Thms. II.3.8-3.9]{Ten})
$$
|\zeta(\sg + it)| = \begin{cases} O_{\e}\left((2+|t|)^{\tfrac{1}{3}(1-\sg) + \e}\right) &\text{ for } \frac{1}{2} < \sg \leq 1, \\ O(\log(2+|t|)) &\text{ for } \sg \geq 1-1/\log p, 1\leq |t| \leq p, \end{cases}
$$
the horizontal lines at height $\text{Im}(s) = \pm p$ together contribute
$$
\leq \frac{1}{p^2} \int_{1/2}^{\sg_0} |\zeta(u+ip)| p^u\left(\sum_{d < p} \frac{1}{d^u}\right) du \ll_{\e} \frac{1}{p} \int_{1/2}^{1-1/\log p} p^{\frac{1}{3}(1-u)+\e} \frac{du}{1-u} + \frac{1}{p} \int_{1-1/\log p}^{\sg_0} (\log p)^2 du \ll \frac{1}{\sqrt{p}}.
$$
Summarising the above, we so far have
$$
S_p = \frac{p}{2}L(1,\chi_p) + \frac{1}{2\pi i} \int_{1/2-ip}^{1/2+ip} \frac{p^s}{s(s+1)} \zeta(s) \left(\sum_{d < p} \frac{\chi_p(d)}{d^s}\right) ds + O\left(\sqrt{p}\log p\right).
$$
Next, suppose $s = 1/2+it$, with $|t| \leq p$, and set 
$$
S_{\chi_p}(u) := \sum_{n \leq u} \chi_p(n), \quad u \geq 1.
$$ 
Let $1 \leq y\leq p^{3/4}$ be a parameter to be chosen shortly. We have by a dyadic decomposition and partial summation that
\begin{align*}
\left|\sum_{d < p} \frac{\chi_p(d)}{d^s}\right| &\leq \sum_{d \leq y} \frac{1}{\sqrt{d}} + \sum_{y \leq 2^j \leq p} \left|\sum_{2^j \leq d < 2^{j+1}} \frac{\chi_p(d)}{d^{1/2+it}}\right| \\
&\ll y^{1/2} + \sum_{y \leq 2^j \leq p} \left(\frac{|S_{\chi_p}(2^{j+1})|}{2^{j+1}} + \frac{|S_{\chi_p}(2^j)|}{2^j} + (1/2+|t|) \int_{2^j}^{2^{j+1}} \frac{|S_{\chi_p}(u)|}{u^{3/2}} du\right).
\end{align*}
Applying the P\'{o}lya-Vinogradov inequality $\max_{1 \leq u \leq p} |S_{\chi_p}(u)| \ll \sqrt{p}\log p$, we obtain the upper bound
\begin{align*}
&\ll y^{1/2} + \sqrt{p}(\log p)\sum_{y \leq 2^j \leq p} \left(\frac{1}{2^{j}} + (1/2+|t|) 2^{-j/2}\right) \ll y^{1/2} + (1+|t|) \frac{\sqrt{p} \log p}{\sqrt{y}}.
\end{align*}
Inserting this together with the subconvexity bound
$$
|\zeta(1/2+it)| \ll (|t|+2)^{1/6} \log(2+|t|), \quad |t| \leq p,
$$
(see e.g., \cite[Cor. II.3.7]{Ten}) and setting $y := p^{2/3} (\log p)^2$, we obtain
\begin{align*}
&\left|\frac{1}{2\pi i} \int_{1/2-ip}^{1/2+ip} \frac{p^s}{s(s+1)} \zeta(s) \left(\sum_{d < p} \frac{\chi_p(d)}{d^s}\right) ds \right| \\
&\ll p^{1/2}y^{1/2} \int_{|t| \leq p} \frac{(|t|+2)^{1/6} \log(|t| + 2) }{(2+|t|)^2}dt + \frac{p(\log p)}{\sqrt{y}} \int_{|t| \leq p} \frac{(2+|t|)^{7/6}\log(|t|+2)}{(2+|t|)^2} dt \\
&\ll p^{5/6} \log p.
\end{align*}
Collecting the above estimates yields
$$
S_p = \frac{p}{2}L(1,\chi_p) + O\left(p^{5/6}\log p\right),
$$
as claimed.
\end{proof}
\begin{proof}[Proof of Proposition \ref{prop:LlamBd}]
Suppose $|\mc{L}_{\la}(p)| = p-1$. Since $p$ is prime we can find a primitive root $2\leq d < p$ modulo $p$, and by Proposition \ref{prop:dilation}, $d$ satisfies \eqref{eq:transform} for all $\xi \pmod{p}$. Lemma \ref{lem:2primroot} therefore implies that $\la(n) = \chi_p(n)$ for all $n < p$. \\
Since $1 \ast \lambda$ is the indicator function for the set of perfect squares, by Lemma \ref{lem:LiVigen} we find that
$$
\frac{p}{2} L(1,\chi_p) + O(p^{5/6}\log p) = \sum_{n < p} \left(1-\frac{n}{p}\right) \sum_{d|n} \lambda(d) = \sum_{n < p} \left(1-\frac{n}{p}\right) (1\ast \lambda)(n) < \sum_{m^2 < p} 1 < \sqrt{p}.
$$
We deduce from this that $L(1,\chi_p) \ll p^{-1/2} + (\log p)p^{-1/6} \ll (\log p)p^{-1/6}$, as claimed. 
\end{proof}
\subsection{Bounds on the multiplicity of prime divisors of $N$}
We next extend Lemma \ref{lem:2primroot} to higher powers of $p$.
\begin{lem}\label{lem:extendtopk}
Let $p$ be a prime and assume that $|\mc{L}_{\la}(p^k)| = p^k-1$ for some $k \geq 1$. Then $\lambda(n) = \chi_p(n)$ for all $n< p^k$ with $p \nmid n$.
\end{lem}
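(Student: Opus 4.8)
The plan is to induct on $k$. For the base case $k=1$, the hypothesis $|\mc{L}_{\la}(p)| = p-1$, together with the fact that a primitive root $2 \le d < p$ exists (as $p$ is prime), lets me invoke Proposition \ref{prop:dilation} and then Lemma \ref{lem:2primroot} to conclude $\lambda(n) = \chi_p(n)$ for all $n < p$; this settles $k=1$, since $p \nmid n$ is automatic in that range. I may assume $p$ is odd here: the degenerate case $p=2$ forces $k=1$ (one checks $|\mc{L}_{\la}(4)| = 1 \ne 3$), where the statement is trivial.

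For the inductive step I assume the statement for $k-1$. Since $p^{k-1} \mid p^k$, Lemma \ref{lem:restricttoPrime} gives $|\mc{L}_{\la}(p^{k-1})| = p^{k-1}-1$, so the inductive hypothesis applies and $\lambda(n) = \chi_p(n)$ for all $n < p^{k-1}$ coprime to $p$. Applying Lemma \ref{lem:restricttoPrime} with $N = p^k$ also yields the reflection identity
\[
\lambda(m)\lambda(p^k - m) = \lambda(p^k-1) \quad \text{for all } 1 \le m < p^k,
\]
and $\lambda(p^k-1) = \lambda(p-1) = \chi_p(p-1) = \chi_p(-1)$, the middle equality coming from the base case. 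Hence $\lambda(m)\lambda(p^k - m) = \chi_p(-1)$ for every $1 \le m < p^k$.

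It remains to treat $n$ with $p^{k-1} \le n < p^k$ and $p \nmid n$, which I would do by strong induction on $n$, the values below $p^{k-1}$ serving as the base. The key device is to scale $n$ up so that its reflection lands in the already-known region: set $c := \lfloor p^k/n \rfloor$ and $m := p^k - cn$. Since $\gcd(n,p)=1$ and $n>1$ we have $n \nmid p^k$, so $m$ is the least positive residue of $p^k$ modulo $n$ and $1 \le m < n$; moreover $1 \le c \le p-1$ (the value $c = p$ would force $n = p^{k-1}$, impossible for $n$ coprime to $p$), so $c$ is coprime to $p$ and $c < p \le p^{k-1} \le n$. Thus $c$ and $m$ both lie below $n$ and are coprime to $p$, whence $\lambda(c) = \chi_p(c)$ and $\lambda(m) = \chi_p(m)$ by strong induction. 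The reflection identity with this $m$ reads $\lambda(m)\lambda(cn) = \chi_p(-1)$; expanding $\lambda(cn) = \lambda(c)\lambda(n)$ and using $\chi_p(m) = \chi_p(-cn) = \chi_p(-1)\chi_p(c)\chi_p(n)$ (valid since $m \equiv -cn \pmod p$), a short manipulation with $\chi_p(c)^2 = 1$ collapses to $\chi_p(n)\lambda(n) = 1$, i.e.\ $\lambda(n) = \chi_p(n)$, as required.

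The main obstacle is exactly the construction of the auxiliary multiplier $c$: one needs a single scaling that simultaneously (i) keeps $cn$ below $p^k$ so that the reflection identity applies, (ii) places $m = p^k - cn$ strictly below $n$, so that $m$ falls in the already-known range, and (iii) keeps both $c$ and $m$ coprime to $p$ and below $n$. The choice $c = \lfloor p^k/n\rfloor$ achieves all of these at once, and the hypothesis $n \ge p^{k-1}$ is precisely what guarantees $c \le p-1$. Everything downstream of this choice is a mechanical identity between the completely multiplicative functions $\lambda$ and $\chi_p$, so I expect the write-up to be short once $c$ is in hand.
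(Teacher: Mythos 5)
Your proof is correct, and it takes a genuinely different route from the paper's, though the two share their key device. Both arguments combine the reflection identity $\lambda(m)\lambda(p^k-m)=\lambda(p^k-1)=\lambda(p-1)$ supplied by Lemma \ref{lem:restricttoPrime} with the same scaling trick: multiply the integer of interest by its floor quotient against the modulus so that the reflected point falls into already-known territory (your $c=\lfloor p^k/n\rfloor$, with $n>p^{k-1}$ guaranteeing $1\le c\le p-1$, is precisely the paper's $\lfloor p^j/q_r\rfloor$ with $q_r>p^{j-1}$). The difference is the induction scaffolding. The paper works at the level of primes: it proves the intermediate claim that $\chi_p(q)=-1$ for all primes $q<p^j$, $q\neq p$, by an induction on the exponent $j$ containing a second induction over the increasing list of primes in $(p^{j-1},p^j)$; because its inductive knowledge concerns only integers whose largest prime factor is small, it must track $P^+(n)$ and first derive the relation $\chi_p(P^+(n))=\chi_p(P^+(p^j-n))$, then extend to all $n$ by complete multiplicativity. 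You instead run a strong induction on $n$ itself (with the case $k-1$ serving as the base range), so your hypothesis gives $\lambda=\chi_p$ at \emph{all} smaller integers coprime to $p$, not merely at primes; with this stronger hypothesis the inductive step collapses to the character computation $\chi_p(m)\chi_p(c)\lambda(n)=\chi_p(-1)$ combined with $\chi_p(m)=\chi_p(-1)\chi_p(c)\chi_p(n)$, and no prime enumeration or largest-prime-factor bookkeeping is needed. Your version buys brevity and a cleaner inductive statement; the paper's version makes explicit the prime-level fact $\chi_p(q)=-1$ for all primes $q<p^k$, which is what Corollary \ref{cor:primPowBd} actually invokes, though that fact is recovered at once from your statement since $\lambda(q)=-1$ at every prime. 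Your treatment of the edge cases ($p=2$ forces $k=1$; $c=p$ would force $p\mid n$; $m\ge 1$ and $p\nmid m$) is also sound.
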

\begin{proof}
By Lemma \ref{lem:restricttoPrime}, we know that $|\mc{L}_{\la}(p^j)|=p^j-1$ and $\lambda(p^j-1) = \lambda(p-1)$ for all $1 \leq j \leq k$. In the case $j = 1$, combining Propositions \ref{prop:dilation} and \ref{lem:2primroot}, we see that $\lambda(n) = \chi_p(n)$ for all $n < p$. We now extend this to show that $\lambda(n) = \chi_p(n)$ for all $n < p^k$ with $p \nmid n$. \\
To do this, we prove by induction on $1 \leq j \leq k$ that $\chi_p(q) = -1$ for all primes $q < p^j$, $q \neq p$. The outcome of this induction is that $\lambda(q) = \chi_p(q)$ for all $q < p^k$, $q \neq p$, so that by complete multiplicativity we obtain $\lambda(n) = \chi_p(n)$ for all $n < p^k$ with $p \nmid n$. The base case $j = 1$ follows immediately from $\lambda(n) = \chi_p(n)$ for all $1 \leq n < p$, which we have just established. \\
Assume therefore that $\chi_p(q) = -1$ for all $q < p^i$, $q \neq p$, for all $1 \leq i < j$. We next prove that this is the case for all $q < p^j$, $q \neq p$, or equivalently that $\chi_p(q) = - 1$ for all primes $p^{j-1} < q < p^j$. Note that by complete multiplicativity, the inductive assumption actually implies that $\lambda(n) = \chi_q(n)$ for all $n$ that satisfy $P^+(n) < p^{j-1}$ with $p \nmid n$. Now, we know that 
$$
\lambda(n)\lambda(p^j-n) = \lambda(p^j-1) = \lambda(p-1)
$$
for all $n < p^j$ with $p \nmid n$.
We observe furthermore that for any $n < p^j$ with $p \nmid n$, $P^+(n/P^+(n)) < p^{j-1}$, since either $P^+(n) < p^{j-1}$ or else $n$ has precisely one prime factor $p^{j-1} < q < p^j$, and necessarily $n/q < p$. It follows by the inductive assumption that whenever $1 \leq n < p^j$ with $p \nmid n$,
\begin{align*}
\lambda(p-1) &= \lambda(n)\lambda(p^j-n) = \lambda(P^+(n))\lambda(P^+(p^j-n)) \lambda\left(\frac{n}{P^+(n)}\right)\lambda\left(\frac{p^j-n}{P^+(p^j-n)}\right) \\
&= \chi_p\left(\frac{n}{P^+(n)}\right)\chi_p\left(\frac{p^j-n}{P^+(p^j-n)}\right) = \chi_p(P^+(n)) \chi_p(P^+(p^j-n)) \chi_p\left(n\right)\chi_p\left(p^j-n\right) \\
&= \chi_p(P^+(n)) \chi_p(P^+(p^j-n))\chi_p(p-1) = \chi_p(P^+(n)) \chi_p(P^+(p^j-n))\lambda(p-1).
\end{align*}
We deduce, therefore, that 
\begin{equation}\label{eq:chipRel}
\chi_p(P^+(n)) = \chi_p(P^+(p^j-n)) \text{ for all } 1 \leq n < p^j, \, p \nmid n,
\end{equation}
which we will presently use to show that $\chi_p(q) = -1$ for all $p^{j-1} < q < p^j$. \\
Let us enumerate the primes in the interval $(p^{j-1},p^j)$ as $\{q_1,\ldots,q_R\}$. We prove by a second induction on $1 \leq r \leq R$ that $\chi_p(q_r) = -1$. In the base case $r = 1$, observe that if $n_1 := \lfloor p^j/q_1\rfloor q_1$, which is coprime to $p$, then as $n_1/q_1 < p$ we have $P^+(n_1) = q_1$ (recalling that $j>1$ here). Moreover,
$$
p^j - n_1 = p^j - q_1\left\lfloor \frac{p^j}{q_1} \right\rfloor = q_1 \left\{\frac{p^j}{q_1}\right\} < q_1.
$$
Thus, $P^+(p^j-n_1) < q_1$, and since $q_1$ is the minimal prime $> p^{j-1}$ (and $p \nmid p^j-n_1$), we have $P^+(p^j-n_1) < p^{j-1}$. By our inductive assumption on $j$, we have $\chi_p(P^+(p^j-n_1)) = -1$, and hence by \eqref{eq:chipRel}, $\chi_p(q_1) = \chi_p(P^+(n_1)) = -1$. \\
Now supposing that $\chi_p(q_s) = -1$ for all $1 \leq s \leq r$, we find by the same construction that if $n_{r+1} := \lfloor p^j/q_{r+1}\rfloor q_{r+1}$ then $P^+(n_{r+1}) = q_{r+1}$ whereas 
$$
P^+(p^j-n_{r+1}) = P^+\left(q_{r+1}\left\{\frac{p^j}{q_{r+1}}\right\}\right) < q_{r+1}.
$$ 
It follows that $P^+(p^j-n_{r+1})$ is either $q_s$ for some $1 \leq s \leq r$, or else it is $< p^{j-1}$. Either by our inductive assumption on $r$ or on $j$, we obtain $\chi_p(P^+(p^j-n_{r+1})) = -1$, and thus $\chi_p(q_{r+1}) = -1$ as well. By induction on $r$, we obtain $\chi_p(q_r) = -1$ for all $1 \leq r \leq R$, and thus for all primes $p^{j-1} < q < p^j$. By induction on $j$, the claim follows.
\end{proof}
\begin{cor} \label{cor:primPowBd}
There are effectively computable constants $p_3$ and $L$ such that:
\begin{enumerate}
\item if $p \geq p_3$ and $|\mc{L}_{\la}(p^k)| = p^k-1$ then $k \leq 5$, and
\item if $p < p_3$ and $|\mc{L}_{\la}(p^k)|= p^k-1$ then $k \leq L$.
\end{enumerate}
\end{cor}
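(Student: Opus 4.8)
The plan is to deduce the corollary from Lemma \ref{lem:extendtopk} together with an effective version of Linnik's theorem on the least prime in an arithmetic progression. The starting observation is that if $|\mc{L}_{\la}(p^k)| = p^k-1$, then Lemma \ref{lem:extendtopk} forces $\lambda(n) = \chi_p(n)$ for all $n < p^k$ coprime to $p$; evaluating at a prime $q \neq p$ with $q < p^k$ and using $\lambda(q) = -1$, this gives $\chi_p(q) = -1$. Since $\chi_p$ is the Legendre symbol, the hypothesis therefore says that \emph{every} prime below $p^k$ other than $p$ is a quadratic non-residue modulo $p$. The entire argument then amounts to exhibiting a prime quadratic \emph{residue} that is provably not too large, which bounds $k$.

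To produce such a prime I would apply Linnik's theorem to the residue class $1 \pmod p$, which is coprime to $p$ and consists of quadratic residues, so that $\chi_p(q) = \chi_p(1) = +1$ for any prime $q \equiv 1 \pmod p$. Linnik's theorem supplies an absolute, effectively computable constant $c$ and Linnik's constant $L_0$ such that the least prime $q \equiv 1 \pmod p$ satisfies $q \le c\, p^{L_0}$. Because the best known value of Linnik's constant is smaller than $6$, for all sufficiently large $p$ one has $c\, p^{L_0} < p^6$. Choosing $p_3$ to exceed both this threshold and any earlier one (such as the $p_1$ of Proposition \ref{prop:LlamBd}) settles part (1): if $p \ge p_3$ and $k \ge 6$, then $q < p^6 \le p^k$, so $q$ is a prime below $p^k$ distinct from $p$, and Lemma \ref{lem:extendtopk} forces $\chi_p(q) = -1$, contradicting $\chi_p(q) = +1$. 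Hence $k \le 5$.

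For part (2) I would argue by finiteness. There are only finitely many primes $p < p_3$, and for each odd such $p$ Dirichlet's theorem yields a concrete least prime $q(p) \neq p$ with $\chi_p(q(p)) = +1$ (for instance the least prime $\equiv 1 \pmod p$). Rerunning the contradiction above, $q(p) < p^k$ is impossible, so $p^k \le q(p)$, i.e. $k \le \log q(p)/\log p$. Setting $L := \max_{p < p_3} \lfloor \log q(p)/\log p \rfloor$, a maximum over a finite and explicitly listable set, produces an effectively computable constant with $k \le L$. The degenerate prime $p = 2$, for which $\chi_p$ is not a genuine quadratic character and Lemma \ref{lem:2primroot} does not apply, is subsumed in this finite range and disposed of by direct computation (already $|\mc{L}_{\la}(4)| = 1 < 3$ rules out $k \ge 2$).

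The substantive input here is Lemma \ref{lem:extendtopk}, which is already established; what remains is a short deduction. The only real obstacle is bookkeeping: one must invoke the strongest available effective form of Linnik's theorem so that the exponent is pinned strictly below $6$ (yielding the sharp integer bound $k \le 5$ rather than a weaker one), and one must verify that $p_3$, the constant $L$, and the finite list of witnesses $q(p)$ are all effectively computable. This ensures that the corollary itself introduces no new ineffectivity, the sole ineffective ingredient in the paper being Siegel's theorem entering through Proposition \ref{prop:LlamBd}.
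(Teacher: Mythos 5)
Your proposal is correct and follows essentially the same route as the paper: Lemma \ref{lem:extendtopk} forces $\chi_p(q)=-1$ for all primes $q<p^k$, $q\neq p$, and Linnik's theorem (with Heath-Brown's exponent $5.5$) produces a prime $q\equiv 1 \pmod p$, hence $\chi_p(q)=+1$, with $q\ll p^{5.5}$, giving $p^k \ll p^{5.5}$ and thus the stated bounds. The only (harmless) divergence is in part (2), where the paper extracts $L$ uniformly from the same Linnik inequality via $2^{k-5.5}<c$, while you take a finite maximum of Dirichlet witnesses $q(p)$ over $p<p_3$; both are effective, and your explicit treatment of $p=2$ (where $\chi_p$ degenerates) is a point the paper leaves implicit.
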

\begin{proof}
Suppose $|\mc{L}_{\la}(p^k)| = p^k-1$. By the previous lemma we obtain that $\chi_p(n) = \lambda(n)$ for all $n < p^k$, and in particular $\chi_p(q) = -1$ for all primes $q < p^k$. \\
By Heath-Brown's version of Linnik's theorem \cite{HB}, there is an effectively computable constant $c > 0$ and a prime $q < c p^{5.5}$ such that $q \equiv 1 \pmod{p}$. If $q < p^k$ then we obtain the contradiction $-1 = \chi_p(q) = \chi_p(1) = +1$. We therefore conclude that $p^k < q < cp^{5.5}$. This implies that either $k \leq 5$, or else $p< p_3$, with $p_3 := c^2$. Moreover, in the latter case $p < p_3$ we also have that $2^{k-5.5} < c$, so that $k \leq L$ for $L := 5.5 + \tfrac{\log c}{\log 2}$.
\end{proof}
\begin{proof}[Proof of Theorem \ref{thm:main}] Suppose $N \geq 2$ satisfies $|\mc{L}_\la(N)| = N-1$. By Lemma \ref{lem:restricttoPrime}, we have $|\mc{L}_{\la}(p^k)| = p^k-1$ for each prime power $p^k||N$, and in particular $|\mc{L}_{\la}(p)| = p-1$ for each prime $p|N$. \\
By Proposition \ref{prop:LlamBd}, there is a(n effectively computable) constant $C > 0$ such that
$$
L(1,\chi_p) \leq C(\log p)/p^{1/6} \text{ for each } p|N
$$ 
(taking $C\geq C_1$ as needed to cover the range $p < p_1$). But by Siegel's theorem \cite[Thm. 5.28(2)]{IK}, this is impossible as soon as $p \geq p_0$, where $p_0$ is a(n ineffective) constant. It follows that all primes $p|N$ satisfy $p < p_0$, as claimed. \\
Furthermore, by Corollary \ref{cor:primPowBd} there is a(n effectively computable) constant $L > 0$ such that if $p < p_0$ and $p^k||N$ then $k \leq L$. It follows that 
$$
N = \prod_{p^k|| N} p^k \leq \left(\prod_{p < p_0} p\right)^L =: N_0.
$$
Therefore, if $N > N_0$ then $|\mc{L}_{\la}(N)| < N-1$, as claimed.
\end{proof}
\begin{rem}
The ineffectivity inherent in our main result is due to the possible presence of Siegel zeros, which renders Siegel's theorem ineffective. This seems compelling in that, typically, Siegel zeros are \emph{useful} for establishing non-trivial estimates about correlations of the Liouville function. For example, in regards to the convolution sums treated in this paper, De Koninck, Germ\'{a}n and K\'{a}tai proved \cite{DKGK} that if $(p_k)_k$ is a sequence of exceptional moduli, and if $\eta_k := (1-\beta_k) \log p_k$, then for any $N \in (p_k^{10}, p_k^{c\log\log(1/\eta_k)}]$ (albeit with the condition $(N,p_k) = 1$),
$$
\left|\frac{1}{N}\sum_{n < N} \lambda(n)\lambda(N-n)\right| \leq \frac{1}{\log\log(1/\eta_k)} + o_{k \ra \infty}(1).
$$
However, besides the restriction $(N,p_k) = 1$ not suiting our purposes here, our results are limited by the fact that $\lambda(n) = \chi_{p_k}(n)$ only in the limited range $1 \leq n < p_k$. It is unclear whether the result of \cite{DKGK}, for example, which requires a broader range in which $\lambda$ behaves periodically in some sense, may be applicable in the present circumstances in order to render our theorem effective.
\end{rem}

\section{Proof of Proposition \ref{prop:dilation}} \label{sec:iter}
In this section we prove Proposition \ref{prop:dilation}. As we show later, it suffices to consider the case when $d$ is prime. Having shown the cases $d = 2$ and $3$ in Lemma \ref{lem:23dilate}, we focus here on $d > 3$. 
\subsection{An iterative argument for $d=q$ prime}
As previously, let $p > 3$ be a prime with $|\mc{L}_{\la}(p)| = p-1$. Let $3 < q < p$ be an odd prime. We wish to show that 
$$
S_{\lambda}(q\xi) = -S_{\lambda}(\xi) \text{ for all } \xi \pmod{p}.
$$ 
As in the proof of Lemma \ref{lem:23dilate}, it suffices to show that
\begin{align*}
-(p-1) &= \frac{1}{p} \sum_{\xi \pmod{p}} S_{\lambda}(q\xi) \bar{S}_{\la}(\xi) = \sum_{m,n < p} \lambda(m)\lambda(n) \frac{1}{p}\sum_{\xi \pmod{p}} e\left(\xi\frac{qn-m}{p}\right) \\
&= \sum_{\ss{m,n < p \\ qn \equiv m \pmod{p}}} \lambda(m)\lambda(n).
\end{align*}
As before, we split the set of $1 \leq m,n < p$ according to the choice of $0 \leq j < q$ for which $qn = m+jp$; in each case we have $m \equiv -jp \pmod{q}$, and thus we must show that
$$
-(p-1) = \sum_{j = 0}^{q-1} \sum_{\ss{m < p \\ m \equiv -jp \pmod{q}}} \lambda(m) \lambda\left(\frac{m+jp}{q}\right) = -\sum_{j = 0}^{q-1} \sum_{\ss{m < p \\ m \equiv -jp \pmod{q}}} \lambda(m) \lambda\left(m+jp\right).
$$
Equivalently, it is our goal to prove that for every $0 \leq j < q$ and $1 \leq m < p$ we have
\begin{equation}\label{eq:equivcond}
\lambda(m)\lambda(m+jp) = +1 \text{ whenever } m \equiv -jp \pmod{q}.
\end{equation}
For $1 \leq r \leq q-1$ we define the sets
$$
\mc{A}_{q,r} := \{(m,j) \in \{1,\ldots,p-1\} \times \{0,\ldots,q-1\} : \frac{pq}{r+1} < m+jp < \frac{pq}{r}\}, \quad \mc{A}_{q,q} := \{(m,0) : 1 \leq m < p\}.
$$
Note that the sets $\{\mc{A}_{q,r}\}_{1 \leq r \leq q}$ partition the set of all pairs $(m,j) \in \{1,\ldots,p-1\} \times \{0,\ldots,q-1\}$, in view of the following observations:
\begin{enumerate}[(a)]
\item for each such pair, $1 \leq m+jp < pq$, and therefore must either satisfy $1 \leq m+jp < p$ (so $j = 0$), or else $pq/(r+1) \leq m+jp < pq/r$ for some $1 \leq r \leq q-1$;
\item as $p$ and $q$ are prime we can never have $m+jp = pq/(r+1)$ for any $1 \leq r \leq q-1$ unless $r = q-1$, but in this case $m + jp = p$ is not solvable with $1 \leq m < p-1$;
\item if $(m,j) \notin \mc{A}_{q,r}$ for all $1 \leq r \leq q-1$ then $1 \leq m+jp < p$, equivalently, $j = 0$ and $(m,0) \in \mc{A}_{q,q}$.
\end{enumerate}
For each $1 \leq r \leq q-1$ define a map $\psi_r$ on pairs $(m,j) \in \mc{A}_{q,r}$ via
$$
\psi_r(m,j) := \left(\left\lceil\frac{rm}{p}\right\rceil p - r m, q-jr-\left\lceil \frac{rm}{p}\right\rceil\right),
$$
where, as usual, given $t \in \mb{R}$ we denote by $\lceil t \rceil$ the least integer $k \geq t$.
\begin{lem}\label{lem:psir}
Let $(m,j) \in \mc{A}_{q,r}$ for some $1 \leq r \leq q-1$, and set $(m',j') := \psi_r(m,j)$. The following properties hold: 
\begin{enumerate}[(a)]
\item $m'+j'p \equiv 0 \pmod{q}$ whenever $m+jp \equiv 0 \pmod{q}$,
\item $m' = \{-rm/p\}p = p\left(1-\{rm/p\}\right)$,
\item $(m',j') \in \bigcup_{r+1 \leq s \leq q} \mc{A}_{q,s}$, and
\item if $m+jp \equiv 0 \pmod{q}$ then $\lambda(m+jp) = \lambda(r)\lambda(p-1) \lambda(m'+j'p)$.
\end{enumerate}
In fact, (d) may be rewritten as
\begin{equation}\label{eq:withPer}
\lambda(m)\lambda(m+jp) = \left[\lambda(rm) \lambda\left(p\left\{\frac{rm}{p}\right\}\right)\right] \lambda(m')\lambda(m'+j'p).
\end{equation}
\end{lem}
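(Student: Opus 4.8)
The plan is to extract from the definition of $\psi_r$ a single master identity and then read off the four claims from it. Throughout I write $A := m+jp$ and $B := m'+j'p$, and set $\ell := \lceil rm/p\rceil$, so that $m' = \ell p - rm$ and $j' = q - jr - \ell$. Expanding $B$ and cancelling the two copies of $\ell p$ gives the master identity
$$
B = (\ell p - rm) + (q - jr - \ell)p = qp - r(m+jp) = qp - rA.
$$
Claim (a) is then immediate, since reducing $B = qp - rA$ modulo $q$ yields $B \equiv -rA \pmod q$, which vanishes whenever $q \mid A$. For (b), note that $1 \le m < p$ and $1 \le r < q < p$ with $p$ prime force $p \nmid rm$, so $rm/p \notin \mb{Z}$; using the elementary identities $\lceil t\rceil - t = \{-t\}$ and $\{-t\} = 1-\{t\}$ for non-integral $t$, I get $m' = p(\lceil rm/p\rceil - rm/p) = p\{-rm/p\} = p(1-\{rm/p\})$. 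In particular $0 < m' < p$, so $m'$ is a legitimate first coordinate.

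For (c) I would feed the defining inequalities $pq/(r+1) < A < pq/r$ of $\mc{A}_{q,r}$ into the master identity. The lower bound on $A$ gives $B = qp - rA < qp - r\cdot pq/(r+1) = qp/(r+1)$, and the upper bound gives $B > qp - r\cdot pq/r = 0$. Together with $0 < m' < p$ from (b), these bounds force $0 \le j' \le q-1$, so $(m',j')$ is an admissible pair; and since $B$ can equal no $qp/s$ (because $p,q$ are prime, exactly as in the partition discussion), the value $B \in (0, qp/(r+1))$ lands strictly inside the interval defining some $\mc{A}_{q,s}$ with $r+1 \le s \le q$. This is (c).

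The substantive step is (d), and here the hypothesis $q \mid A$ is essential. By (a) we also have $q \mid B$, so I may write $A = qn$ and $B = qn'$ with integers $n,n'$; dividing the master identity $rA + B = qp$ through by $q$ collapses it to the clean relation $n' = p - rn$. The inequalities defining $\mc{A}_{q,r}$ become $p/(r+1) < n < p/r$, from which $1 \le rn \le p-1$ and hence $1 \le n' = p - rn \le p-1$. Now the convolution property \eqref{eq:convProp} applies verbatim to the complementary pair $(rn, p-rn)$, giving $\lambda(rn)\lambda(n') = \lambda(p-1)$, i.e. $\lambda(r)\lambda(n)\lambda(n') = \lambda(p-1)$ by complete multiplicativity. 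Since $\lambda(A)\lambda(B) = \lambda(q)^2\lambda(n)\lambda(n') = \lambda(n)\lambda(n')$, I conclude $\lambda(A)\lambda(B) = \lambda(r)\lambda(p-1)$, which is precisely (d) after multiplying through by $\lambda(B)$.

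Finally, to pass to the reformulation \eqref{eq:withPer} I would only invoke (b): since $p\{rm/p\} = p - m'$, the convolution property gives $\lambda(p\{rm/p\})\lambda(m') = \lambda(p-1)$, and substituting this together with $\lambda(rm) = \lambda(r)\lambda(m)$ into the right-hand side of \eqref{eq:withPer} reduces it to $\lambda(m)\lambda(A) = \lambda(r)\lambda(m)\lambda(p-1)\lambda(B)$, which is (d) multiplied by $\lambda(m)$. The one genuine idea I expect to matter is in (d): recognising that, once $q$ is divided out, the relation $B = qp - rA$ becomes the honest complement $n' = p - rn$ inside $[1,p-1]$, so that \eqref{eq:convProp} can be applied directly; everything else is bookkeeping, the most delicate part being the verification in (c) that $(m',j')$ is admissible and that its index $s$ exceeds $r$.
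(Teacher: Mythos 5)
Your proof is correct and follows essentially the same route as the paper: the master identity $m'+j'p = pq - r(m+jp)$ drives (a) and (c), the ceiling/fractional-part manipulation gives (b), and (d) together with \eqref{eq:withPer} comes from applying the relation \eqref{eq:convProp} to the complementary pair $r(m+jp)/q$ and $p - r(m+jp)/q = (m'+j'p)/q$ inside $[1,p-1]$. Your bookkeeping differs only cosmetically (you bound $j'$ via the bounds on $m'$ and $m'+j'p$ rather than directly, and you divide out $q$ before invoking \eqref{eq:convProp} instead of carrying the factor $\lambda(qr)$ through), but the substance is identical to the paper's argument.
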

\begin{proof}
(a) We observe that
\begin{equation} \label{eq:afterIter}
m'+j'p = \left(\left\lceil\frac{rm}{p}\right\rceil p - r m\right) + \left(q-jr-\left\lceil \frac{rm}{p}\right\rceil\right)p = p(q-jr)-rm = pq-r(m+jp),
\end{equation}
so that if $m+jp \equiv 0\pmod{q}$ then $m'+j'p \equiv 0 \pmod{q}$ as well.\\
(b) Note that since $r,m < p$, $rm/p \notin \mb{Z}$. Whenever $\alpha \notin \mb{Z}$ we have 
$$
\left\lceil \alpha \right\rceil = \alpha + 1-\{\alpha\}= \alpha + \{-\alpha\}, 
$$
so we therefore conclude that
$$
m' = p\left(\frac{rm}{p} + \left\{-\frac{rm}{p}\right\}\right) - rm = p\left\{-\frac{rm}{p}\right\} = p\left(1-\left\{\frac{rm}{p}\right\}\right),
$$
as required.\\
(c) Using $pq/(r+1) < m+jp < pq/r$ together with \eqref{eq:afterIter}, we deduce that
\begin{align*}
m'+j'p &< pq-r\cdot \frac{pq}{r+1} = pq\left(1-\frac{r}{r+1}\right) = \frac{pq}{r+1} \\
m'+j'p &> pq-r\cdot \frac{pq}{r} = 0.
\end{align*}
Together with (b), the latter bound implies that $1 \leq  m' < p$. Furthermore,
$$
j' = q-\left(jr + \left\lceil \frac{rm}{p}\right\rceil \right) \geq q-1 - \frac{r}{p}(jp+m) > q-1 - \frac{r}{p} \cdot \frac{pq}{r} = -1,
$$
so as $j'$ is an integer with $j' > -1$, and $\lceil rm/p\rceil \geq 1$, we have $0 \leq j' < q$. As $0 < m'+j'p < pq/(r+1)$, it follows that $(m',j') \in \mc{A}_{q,s}$ for some $r+1 \leq s \leq q$, as required. \\
%
%
(d) Since $(m,j) \in\mc{A}_{q,r}$ and $q|(m+jp)$, we see that 
$$
1 \leq \frac{m+jp}{q} < \frac{1}{q} \cdot \frac{pq}{r} = \frac{p}{r}.
$$
It follows that $(rm+rjp)/q \in \mb{Z} \cap [1,p-1]$, and so using \eqref{eq:convProp},
\begin{align*}
\lambda(m+jp) &= \lambda(qr) \la\left(\frac{rm+rjp}{q}\right) = \lambda(qr)\lambda(p-1) \la\left(p-\frac{rm+rjp}{q}\right) = \lambda(qr)\lambda(p-1)\la\left(\frac{(q-rj)p - rm}{q}\right) \\
&= \lambda(r)\lambda(p-1) \la\left(\left(\left\lceil \frac{rm}{p} \right\rceil p - rm\right) + p\left(q-rj-\left\lceil \frac{rm}{p}\right\rceil\right)\right) \\
&= \lambda(r)\la(p-1) \la(m'+j'p),
\end{align*}
as claimed. \\
To prove \eqref{eq:withPer}, it suffices to note using (b) that
$$
\lambda(m') = \lambda(p-p\{rm/p\}) = \lambda(p-1)\lambda(p\{rm/p\}),
$$
after which the identity follows immediately from (c) upon multiplying both sides by $\lambda(m)$.
\end{proof}
The upshot of Lemma \ref{lem:psir}(c) is that $\psi_r$ maps $\mc{A}_{q,r}$ to a set of pairs $(m',j')$ for which $m'+j'p$ has strictly decreased (and in particular $(m',j')$ belongs to $\mc{A}_{q,r'}$ where $r' > r$). We see therefore that by iteratively composing maps $\psi_r$, $r < q$, we must eventually find an image pair in $\mc{A}_{q,q}$, i.e., where the $j$ component is $0$. \\
With this in mind, we introduce the following definition.
\begin{def1} We say that the \emph{signature} of a pair $(m,j)$, $1 \leq m < p$ and $0 \leq j < q$, is the tuple $(r_1,\ldots,r_k)$ such that $1 \leq r_1 < r_2 < \cdots < r_k < q$, with 
$$
\psi_{r_k} \circ \cdots \circ \psi_{r_1}(m,j) = (\tilde{m},0) \in \mc{A}_{q,q},
$$ 
for some $1 \leq \tilde{m} < p$. (Here, we implicitly have that the indices $r_i$ are determined such that $(m,j) \in \mc{A}_{q,r_1}$, $\psi_{r_1}(m,j) \in \mc{A}_{q,r_2}$, $\psi_{r_2} \circ \psi_{r_1}(m,j) \in \mc{A}_{q,r_3}$, and so on.)
\end{def1} 
\begin{lem} \label{lem:iteration}
Let $(m,j)$ have signature $(r_1,\ldots,r_k)$. Then
$$
\lambda(m)\lambda(m+jp) = \prod_{i = 0}^{k-1} \lambda(m_ir_{i+1})\lambda\left(p\left\{\frac{r_{i+1}m_i}{p}\right\}\right),
$$
where we have set
$$
(m_0,j_0) := (m,j), \quad (m_{i+1} , j_{i+1}) := \psi_{r_{i+1}}(m_i,j_i) \text{ for } 0 \leq i \leq k-1.
$$
\end{lem}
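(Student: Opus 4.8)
The plan is to prove the identity by iterating the single-step relation \eqref{eq:withPer} from Lemma \ref{lem:psir} along the orbit of $(m,j)$ under $\psi_{r_1},\psi_{r_2},\ldots,\psi_{r_k}$, and then to observe that the iteration terminates in a trivial factor. One can phrase this as a telescoping product or, equivalently, as an induction on the length $k$ of the signature; I will describe the telescoping version. The essential input is that \eqref{eq:withPer} writes $\lambda(m_i)\lambda(m_i+j_ip)$ as an explicit sign factor $\lambda(m_ir_{i+1})\lambda(p\{r_{i+1}m_i/p\})$ times the residual $\lambda(m_{i+1})\lambda(m_{i+1}+j_{i+1}p)$, where $(m_{i+1},j_{i+1})=\psi_{r_{i+1}}(m_i,j_i)$; the point is that this residual is exactly the quantity fed into the next application.

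Before iterating I would record two structural facts already in hand. First, the signature is well-defined and finite: by Lemma \ref{lem:psir}(c) each $\psi_r$ sends $\mc{A}_{q,r}$ into $\bigcup_{s>r}\mc{A}_{q,s}$, so the indices satisfy $r_1<r_2<\cdots<r_k<q$ and the orbit reaches $\mc{A}_{q,q}$ after at most $q-1$ steps. Second, since \eqref{eq:withPer} is a restatement of Lemma \ref{lem:psir}(d), it presupposes the congruence $q\mid(m_i+j_ip)$ at each stage; this is legitimate because, by Lemma \ref{lem:psir}(a), the congruence is preserved by every $\psi_r$, hence propagates from the initial pair (which satisfies $q\mid(m+jp)$ in the application to \eqref{eq:equivcond}) to all the $(m_i,j_i)$.

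With these in place I would apply \eqref{eq:withPer} successively at $i=0,1,\ldots,k-1$, each time with $r=r_{i+1}$ and $(m,j)=(m_i,j_i)\in\mc{A}_{q,r_{i+1}}$, substituting each resulting residual into the previous line. Telescoping collapses the intermediate terms $\lambda(m_i)\lambda(m_i+j_ip)$ for $1\le i\le k-1$, leaving
$$\lambda(m)\lambda(m+jp)=\left(\prod_{i=0}^{k-1}\lambda(m_ir_{i+1})\lambda\left(p\left\{\frac{r_{i+1}m_i}{p}\right\}\right)\right)\lambda(m_k)\lambda(m_k+j_kp).$$
It then remains only to evaluate the terminal factor: by definition of the signature $(m_k,j_k)=(\tilde m,0)\in\mc{A}_{q,q}$, so $j_k=0$ and hence $\lambda(m_k)\lambda(m_k+j_kp)=\lambda(\tilde m)^2=1$, yielding the claimed formula.

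I do not expect a genuine obstacle, since the content is entirely carried by Lemma \ref{lem:psir} and what remains is bookkeeping. The only point requiring care is the one flagged above: one must verify that $q\mid(m_i+j_ip)$ holds at every stage so that \eqref{eq:withPer} may legitimately be invoked throughout, which is precisely guaranteed by part (a) of Lemma \ref{lem:psir}. One should also check that the indexing of factors matches the partition of the orbit — that the $i$-th application uses $r_{i+1}$ and the pair $(m_i,j_i)$ — but this is immediate from the definition of the signature.
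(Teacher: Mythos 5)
Your proposal is correct and follows essentially the same route as the paper: iterate \eqref{eq:withPer} along the orbit, telescope, and use $j_k=0$ to make the terminal factor $\lambda(m_k)^2=+1$. Your additional check that the divisibility $q\mid(m_i+j_ip)$ propagates via Lemma \ref{lem:psir}(a) --- so that \eqref{eq:withPer} is legitimately applicable at every stage --- is a point the paper leaves implicit, and is exactly the right thing to verify.
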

\begin{proof}
By iteratively invoking \eqref{eq:withPer}, we obtain
\begin{align*}
\lambda(m)\lambda(m+jp) &= \lambda(m_0)\lambda(m_0+j_0p) = \left[ \lambda(m_0 r_{1}) \lambda\left(p\left\{\frac{r_{1}m_0}{p}\right\}\right) \right] \lambda(m_{1})\lambda(m_{1}+j_{1}p) \\
&= \prod_{i = 0}^1\left[ \lambda(m_i r_{i+1}) \lambda\left(p\left\{\frac{r_{i+1}m_i}{p}\right\}\right) \right] \lambda(m_2)\lambda(m_2+j_2p) \\
&= \cdots = \prod_{i = 0}^{k-1}\left[ \lambda(m_i r_{i+1}) \lambda\left(p\left\{\frac{r_{i+1}m_i}{p}\right\}\right) \right] \lambda(m_k)\lambda(m_k+j_kp).
\end{align*}
By the definition of signature, we have $j_k = 0$, and thus $\lambda(m_k)\lambda(m_k+j_kp) = \lambda(m_k)^2 = +1$. The claim follows.
\end{proof}
\subsection{Periodicity via dilation}
In connection with Lemma \ref{lem:psir} we next show the following lemma, which shows that if $S_{\la}$ satisfies the dilation property in Proposition \ref{prop:dilation} with $d = r < p$ then $\la$ exhibits mod $p$ periodicity in $[1,rp-1]$.
\begin{lem} \label{eq:inductiveperiod}
Assume that $1 \leq r < p$ satisfies
\begin{equation}\label{eq:dilbyr}
S_{\lambda}(r\xi) = \lambda(r)S_{\lambda}(\xi) \text{ for all } \xi \pmod{p}.
\end{equation}
Then for any $1 \leq m < p$, $\lambda(p\{rm/p\}) = \lambda(r)\lambda(m)$.
\end{lem}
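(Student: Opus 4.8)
The plan is to recover the value $\la\!\left(p\{rm/p\}\right)$ from the exponential sum $S_{\la}$ by Fourier inversion on $\mb{Z}/p\mb{Z}$, and then to exploit the dilation hypothesis \eqref{eq:dilbyr} through a linear change of the summation variable. The guiding observation is that, since $1 \leq r,m < p$ and $p$ is prime, we have $p \nmid rm$; hence $n^{\ast} := p\{rm/p\}$ is exactly the least positive residue of $rm$ modulo $p$, so that $n^{\ast} \in \{1,\dots,p-1\}$ and $n^{\ast} \equiv rm \pmod{p}$.

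First I would record the inversion formula already used in the proof of Lemma \ref{lem:2primroot}: for every $1 \leq n < p$, orthogonality of additive characters gives
$$
\la(n) = \frac{1}{p} \sum_{\xi \pmod{p}} S_{\la}(\xi) \, e\!\left(-\frac{n\xi}{p}\right),
$$
since $n$ is the unique representative of its residue class inside $\{1,\dots,p-1\}$. Applying this with $n = n^{\ast}$ and using $n^{\ast} \equiv rm \pmod{p}$ to replace the phase $e(-n^{\ast}\xi/p)$ by $e(-rm\xi/p)$ yields
$$
\la\!\left(p\left\{\tfrac{rm}{p}\right\}\right) = \frac{1}{p} \sum_{\xi \pmod{p}} S_{\la}(\xi) \, e\!\left(-\frac{rm\xi}{p}\right).
$$

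Next I would rewrite the hypothesis \eqref{eq:dilbyr} in the equivalent form $S_{\la}(\xi) = \la(r) S_{\la}(r\xi)$, valid because $\la(r)^2 = 1$, and insert it into the previous display. The key algebraic simplification is the identity $rm\xi = m(r\xi)$, so that after the change of variable $\eta = r\xi$ --- a bijection of $\mb{Z}/p\mb{Z}$ since $p \nmid r$ --- the phase $e(-rm\xi/p)$ becomes $e(-m\eta/p)$. The sum then collapses, via a second application of the inversion formula, to
$$
\la\!\left(p\left\{\tfrac{rm}{p}\right\}\right) = \la(r) \cdot \frac{1}{p} \sum_{\eta \pmod{p}} S_{\la}(\eta) \, e\!\left(-\frac{m\eta}{p}\right) = \la(r)\la(m),
$$
which is the desired conclusion.

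I do not expect a genuine obstacle here: the argument is a short Fourier-analytic computation. The only points needing care are the identification $p\{rm/p\} = rm \bmod p$ together with its non-vanishing, and arranging the substitution so that the single factor $\la(r)$ is extracted cleanly. In this connection it is worth noting that phrasing the hypothesis as $S_{\la}(\xi) = \la(r)S_{\la}(r\xi)$ (rather than dilating the variable $m$ directly) is precisely what makes the residue emerge as $rm$ rather than as $mr^{-1}$; starting instead from $\la(m)$ would produce the companion identity $\la(mr^{-1}\bmod p) = \la(r)\la(m)$.
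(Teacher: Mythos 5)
Your proof is correct and takes essentially the same route as the paper: both arguments are a short exercise in Fourier analysis on $\mb{Z}/p\mb{Z}$, identifying $p\{rm/p\}$ as the representative of $rm \bmod p$ in $[1,p-1]$ and exploiting the dilation hypothesis via the substitution $\xi \mapsto r\xi$ (the paper phrases it as $\xi \mapsto r^{-1}\xi$). The only difference is presentational: you apply the inversion formula twice, whereas the paper expands $S_{\la}(r^{-1}\xi) - \la(r)S_{\la}(\xi) = 0$ as a trigonometric polynomial and concludes that each Fourier coefficient vanishes --- two equivalent ways of invoking orthogonality of additive characters.
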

\begin{proof}
Note that by making the bijective change of variables $\xi \mapsto r^{-1} \xi \pmod{p}$ and rearranging, \eqref{eq:dilbyr} yields
$$
S_{\la}(r^{-1}\xi) = \lambda(r)S_{\la}(\xi) \text{ for all } \xi \pmod{p}.
$$
From this, we derive that
\begin{align*}
0 &= S_{\lambda}(r^{-1}\xi) -\lambda(r)S_{\lambda}(\xi) = \sum_{n < p} \lambda(n) e(nr^{-1}\xi/p) - \sum_{m < p} \la(mr) e(m\xi/p) \\
&= \sum_{m < p} e\left(m\xi/p\right) \left(\sum_{\ss{n < p \\ n \equiv rm \pmod{p}}} \lambda(n) - \lambda(mr)\right).
\end{align*}
Since this holds for all $\xi \pmod{p}$ we deduce that for all $1 \leq m < p$,
\begin{equation}\label{eq:invcong}
\sum_{\ss{n < p \\ n \equiv rm \pmod{p}}} \lambda(n) = \lambda(m)\la(r).
\end{equation}
On the other hand, the condition $n \equiv rm \pmod{p}$ with $1 \leq n < p$ is equivalent to $n = p \{rm/p\}$. 
Combining these two facts, we deduce that
$$
\lambda(p\{rm/p\}) = \lambda(m)\lambda(r),
$$
as claimed.
\end{proof}
We are now in a position to prove Proposition \ref{prop:dilation}.
\begin{proof}[Proof of Proposition \ref{prop:dilation}]
We proceed by induction on $d$. When $d = 1$ there is nothing to prove. Thus, assume that for every $1 \leq d < q$ the equation 
\begin{equation}\label{eq:transform1}
S_{\lambda}(d\xi) = \lambda(d) S_{\lambda}(\xi) \text{ holds for all } \xi \pmod{p}.
\end{equation}
We thus must establish \eqref{eq:transform1} for $d = q$ (provided $q < p$). \\
First, we observe that if $q$ is composite then writing $q = ab$ with $1 < a,b < q$ we have that for any $\xi \pmod{p}$,
$$
S_{\lambda}(q\xi) = S_{\lambda}(a(b\xi)) = \lambda(a)S_{\lambda}(b\xi) = \lambda(a)\lambda(b) S_{\lambda}(\xi) = \lambda(q)S_{\lambda}(\xi),
$$
as required. Hence, the claim holds whenever $q$ is composite. \\
Thus, we may assume that $q$ is prime. Assuming the induction hypothesis, we see that for any $1 \leq r < q$ (necessarily coprime to $p$) we have
$$
S_{\lambda}(r\xi) = \lambda(r) S_{\lambda}(\xi) \text{ for all } \xi \pmod{p}.
$$
By Lemma \ref{eq:inductiveperiod}, we see that for any $1 \leq m < p$,
\begin{equation}\label{eq:periodic}
\lambda\left(p\left\{rm/p\right\}\right) = \lambda(r)\lambda(m) = \lambda(rm),
\end{equation}
a fact that we will use momentarily. \\
As discussed above, in order to prove \eqref{eq:transform1} holds with $d = q$ it suffices to show that 
$$
\lambda(m)\lambda(m+jp) = +1 \text{ for all } 0 \leq j < q,  \, 1 \leq m < p \text{ with } m \equiv -jp \pmod{q}.
$$
Now given $1 \leq m < p$ and $0 \leq j < q$ let $(r_1,\ldots,r_k)$ denote the signature of $(m,j)$, recalling that $1 \leq r_1 < \cdots < r_k < q$. By Lemma \ref{lem:iteration}, we have
$$
\lambda(m)\lambda(m+jp) = \prod_{i = 0}^{k-1} \lambda(m_i r_{i+1})\lambda\left(p\left\{\frac{r_{i+1}m_i}{p}\right\}\right).
$$
But since $r_{i+1} < q$ for all $0 \leq i \leq k-1$, we know that \eqref{eq:periodic} holds with each $r = r_{i+1}$. Thus, every factor in the right-hand product is simply $+1$, and hence $\lambda(m)\lambda(m+jp) = +1$, as required. Hence, \eqref{eq:transform1} holds when $q$ is prime as well. \\
The inductive claim therefore follows in all cases, and so by induction, the proof is complete.
%
\end{proof}

\bibliographystyle{plain}
\bibliography{LGBib.bib}

\begin{thebibliography}{10}

\bibitem{BaRa}
G.~Bachman and L.~Rachakonda.
\newblock On a problem of {D}obrowolski and {W}illiams on the
  {P}\'{o}lya-{V}inogradov inequality.
\newblock {\em Ramanujan J.}, 5:65--71, 2001.

\bibitem{Cho}
S.~Chowla.
\newblock {\em The Riemann hypothesis and Hilbert's tenth problem}, volume~4 of
  {\em Mathematics and Its Applications}.
\newblock Gordon and Breach Science Publishers, New York-London-Paris, 1965.

\bibitem{CorKa}
C.A. Corr\'{a}di and I.~K\'{a}tai.
\newblock Some problems concerning the convolutions of number-theoretical
  functions.
\newblock {\em Arch. Math.}, 20:24--29, 1969.

\bibitem{DKGK}
J.-M. De~Koninck, L.~Germ\'{a}n, and I.~K\'{a}tai.
\newblock On the convolution of the {L}iouville function under the existence of
  {S}iegel zeros.
\newblock {\em Lith. Math. J.}, 55(3):331--342, 2015.

\bibitem{HB}
D.R. Heath-Brown.
\newblock Zero-free regions for {D}irichlet ${L}$-functions, and the least
  prime in an arithmetic progression.
\newblock {\em Proc. Lond. Math. Soc.}, 64(3):265--338, 1992.

\bibitem{IK}
H.~Iwaniec and E.~Kowalski.
\newblock {\em Analytic number theory}, volume~53 of {\em American Mathematical
  Society Colloquium Publications}.
\newblock American Mathematical Society, Providence, RI, 2004.

\bibitem{LiVi}
Yu.~V. Linnik and A.I. Vinogradov.
\newblock Hyperelliptic curves and the least prime quadratic residue (in
  russian).
\newblock {\em Dokl. Akad. Nauk. SSSR}, 168:259--261, 1966.

\bibitem{AIMList}
American~Institute of~Mathematics.
\newblock {AIM} problem list: {S}arnak's conjecture, 2018.
\newblock \url{http://aimpl.org/sarnakconjecture/5/}.

\bibitem{Tat}
T.~Tatuzawa.
\newblock On a theorem of {S}iegel.
\newblock {\em Japan. J. Math.}, 21:163--178, 1951.

\bibitem{Ten}
G.~Tenenbaum.
\newblock {\em Introduction to analytic and probabilistic number theory, 3rd
  Ed.}
\newblock Graduate Studies in Mathematics vol. 163, American Mathematical
  Society, 2015.

\bibitem{Trud}
T.~Trudgian.
\newblock A new upper bound for $|\zeta(1+it)|$.
\newblock {\em Bull. Aust. Math. Soc.}, 89:259--264, 2014.

\end{thebibliography}

\end{document}